\newtheorem{thm}{Theorem}[section]
\newtheorem{lem}[thm]{Lemma}
\newtheorem{rem}[thm]{Remark}
\numberwithin{equation}{section}
\begin{document}
\title{\bf On the length of the longest consecutive switches}
\author{Chen-Xu Hao , Ze-Chun Hu and Ting Ma\thanks{Corresponding author: College of Mathematics, Sichuan University, Chengdu 610065, China\vskip 0cm E-mail address: matingting2008@yeah.net}\\ \\
 {\small College of Mathematics, Sichuan  University,  China}}

\maketitle
\date{}



\vskip 0.5cm \noindent{\bf Abstract}\quad   An unbiased coin is tossed $n$ times independently and sequentially. In this paper, we will study the length of the longest consecutive switches, and  prove that  the limit behaviors are similar to that of the length of the longest head-run.

\smallskip

\noindent {\bf Keywords: }  longest consecutive switches, longest head-run.

\smallskip

\noindent {\bf Mathematics Subject Classification (2010)}\quad 60F15


\section{Introduction}

An unbiased coin with two sides named by ``head" and ``tail" respectively, is tossed $n$ times independently and sequentially. We use 0 to denote ``tail" and 1 to denote ``head".
For simplicity, we assume that all the random variables in the following are defined in a probability space $(\Omega,\mathcal{F},P)$.
Let $\{X_i,i\geq 1\}$ be a sequence of independent and identically distributed random variables with $P\{X_1=0\}=P\{X_1=1\}=\frac{1}{2}$.  Let $S_0=0, S_n=X_1+\cdots+X_n,~n=1,2,\ldots,$ and
\begin{equation}\label{def-INK}
I(N,K)=\max_{0\leq n\leq N-K}(S_{n+K}-S_n),~~~~N\geq K,~N,K\in\mathbb N.
\end{equation}
Denote by $Z_N$ the largest integer for which $I(N,Z_N)=Z_N$. Then $Z_N$ is the length of the longest head-run of pure heads in $N$ Bernoulli trials.

The statistic $Z_N$ has been long studied because it has extensive applications in reliability theory, biology, quality control, pattern recognition, finance,  etc.
 Erd\"{o}s and R\'{e}nyi (1970)   proved  the following result.

\begin{thm}\label{thm-1.1}
Let $0<C_1<1<C_2<\infty$. Then for almost all $\omega\in \Omega$, there exists a finite $N_0=N_0(\omega,C_1,C_2)$ such that
$$
[C_1\log N]\leq Z_N\leq [C_2\log N]
$$
if $N\geq N_0$.
\end{thm}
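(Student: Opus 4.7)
The plan is to derive Theorem~\ref{thm-1.1} by establishing the upper and lower bounds separately via the Borel--Cantelli lemma, with $\log$ read as the binary logarithm (so that the underlying almost sure limit $Z_N/\log N\to 1$ falls strictly between $C_1$ and $C_2$).

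For the upper bound, I would begin with the elementary union bound over the $N-k+1$ admissible starting positions of a head-run of length $k$, namely
\begin{equation*}
P(Z_N\geq k)\leq(N-k+1)\cdot 2^{-k}\leq 2N\cdot 2^{-k}.
\end{equation*}
Choosing $k=[C_2\log N]$, this is at most a constant times $N^{1-C_2}$, which fails to be summable in $N$ unless $C_2>2$. The standard way around this is to apply the first Borel--Cantelli lemma along the geometric subsequence $N_m=2^m$ (for which $N_m^{1-C_2}$ \emph{does} sum), and then interpolate to general $N$ via the monotonicity $Z_N\leq Z_{N'}$ for $N\leq N'$. Fixing an auxiliary $C_2'\in(1,C_2)$ yields $Z_{2^m}\leq[C_2'\,m]$ eventually almost surely, and for $2^{m-1}\leq N\leq 2^m$ one has $Z_N\leq Z_{2^m}\leq[C_2'\,m]\leq[C_2(m-1)]\leq[C_2\log N]$ as soon as $m\geq C_2/(C_2-C_2')$, which delivers the upper bound for all sufficiently large $N$.

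For the lower bound, the overlapping-window union bound is too wasteful; instead I would exploit block independence. Partition $\{1,\dots,N\}$ into $m:=\lfloor N/k\rfloor$ consecutive non-overlapping blocks of length $k:=[C_1\log N]$. The events ``block $i$ is entirely heads'' are mutually independent, each of probability $2^{-k}\geq N^{-C_1}$, so
\begin{equation*}
P(Z_N<k)\leq(1-2^{-k})^m\leq\exp\!\bigl(-m\cdot 2^{-k}\bigr)\leq\exp\!\bigl(-c\,N^{1-C_1}/\log N\bigr)
\end{equation*}
for a suitable constant $c>0$. Because $C_1<1$, the exponent grows faster than any power of $\log N$, so the right-hand side is summable in $N$, and a direct application of Borel--Cantelli delivers $Z_N\geq[C_1\log N]$ eventually almost surely.

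I expect the principal obstacle to lie on the upper-bound side: the raw union bound fails to be summable at every threshold $C_2\in(1,2]$, so one must work along a geometric subsequence and then recover control at every $N$ by monotonicity at the cost of a small gap between $C_2'$ and $C_2$, absorbing the rounding effects of the floor along the way. The lower bound is comparatively gentle, since the block-independence argument produces super-polynomial decay that dispenses with any subsequence device.
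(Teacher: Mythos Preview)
The paper does not actually prove Theorem~\ref{thm-1.1}; it is quoted as a classical result of Erd\H{o}s and R\'enyi (1970) and serves only as motivation for the analogous statement about $M_N$ (Theorem~3.1). Your proposal is nonetheless correct and essentially the standard argument.

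It is worth noting that your strategy coincides almost verbatim with the paper's proof of the parallel result, Theorem~3.1, for the switch count $M_N$: the lower bound there is obtained by exactly your block-independence argument (partition into $\lfloor N/t\rfloor$ disjoint blocks of length $t=[(1-\varepsilon)\log N]+1$, use independence, and sum), and the upper bound is obtained by the same union bound followed by a subsequence-plus-monotonicity interpolation. The one cosmetic difference is the choice of subsequence: the paper passes to the polynomial grid $N=k^T$ with $T\varepsilon>1$, whereas you pass to the geometric grid $N_m=2^m$ together with an auxiliary $C_2'\in(1,C_2)$. Both devices serve the identical purpose of making the union bound $N^{1-C_2}$ summable and then recovering all $N$ by monotonicity of $Z_N$; neither buys anything the other does not.
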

Hereafter, we denote by ``log" the logarithm with base 2, and by  $\left[x\right]$  the largest integer which is no more than $x$.  Theorem 1.1 was extended by
Koml\'{o}s and Tusn\'{a}dy (1975).

Erd\"{o}s and R\'{e}v\'{e}sz (1976)  presented several sharper bounds of $Z_N$ including the following four theorems among other things.

\begin{thm}\label{thm-1.2}
Let $\varepsilon$ be any positive number. Then for almost all $\omega\in \Omega$, there exists a finite $N_0=N_0(\omega,\varepsilon)$ such that if $N\geq N_0$, then
$$
Z_N\geq[\log N-\log\log\log N+\log\log e-2- \varepsilon].
$$
\end{thm}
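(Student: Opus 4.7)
My plan is a direct Borel--Cantelli argument along the subsequence at which the threshold
$$K_N:=[\log N-\log\log\log N+\log\log e-2-\varepsilon]$$
increments. Let $M_r:=\min\{N:K_N=r\}$, so that $K_N=r$ on the whole block $[M_r,M_{r+1}-1]$. Since $Z_N$ is non-decreasing in $N$, the conclusion $Z_N\ge K_N$ for all large $N$ follows at once from $Z_{M_r}\ge r$ for every $r\ge r_0(\omega)$: any $N$ with $K_N\ge r_0$ satisfies $N\ge M_{K_N}$, hence $Z_N\ge Z_{M_{K_N}}\ge K_N$. This sidesteps any explicit interpolation between subsequences.

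To show that $Z_{M_r}\ge r$ eventually almost surely, I introduce the indicators
$$I_0=\mathbf{1}\{X_1=\cdots=X_r=1\},\qquad I_i=\mathbf{1}\{X_i=0,\ X_{i+1}=\cdots=X_{i+r}=1\}\ (1\le i\le M_r-r),$$
which register the starting positions of distinct head-runs of length $\ge r$ in $[1,M_r]$, and set $U_r:=\sum_i I_i$, $\lambda_r:=E[U_r]$. Inverting $K_{M_r}=r$ gives $\log M_r=r+\log\log r+O(1)$ with an explicit $\varepsilon$-dependent constant; hence $\lambda_r\sim M_r/2^{r+1}\sim 2^{\varepsilon+1}(\ln 2)\log r$ and, using $e^{-y\ln 2}=2^{-y}$ together with $\log=\log_2$, one obtains $e^{-\lambda_r}=r^{-2^{\varepsilon+1}}$. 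The exponent $c:=2^{\varepsilon+1}$ strictly exceeds $1$, so any bound of the form $P(U_r=0)\le Ce^{-\lambda_r}$ makes $\sum_r P(U_r=0)$ convergent, and Borel--Cantelli then delivers $Z_{M_r}\ge r$ for all $r\ge r_0(\omega)$ almost surely.

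The key probabilistic input---and the main obstacle---is the Poisson-type estimate $P(U_r=0)\le(1+o(1))e^{-\lambda_r}$. I plan to establish it by the Chen--Stein method: the pairs $(I_i,I_j)$ are pairwise disjoint whenever $1\le|i-j|\le r$ (so that one of the two Chen--Stein coupling terms vanishes identically) and are independent when $|i-j|>r$, while the remaining coupling term is easily bounded by $O(M_r\cdot r\cdot 2^{-2r})=O(r\log r/2^r)=o(r^{-c})$. Janson's inequality or a direct renewal estimate on the waiting time for the first head-run of length $r$ would yield the same bound. The Poisson upgrade is essential: a direct variance bound $\operatorname{Var}(U_r)\le\lambda_r$ combined with Chebyshev only yields $P(U_r=0)=O(1/\log r)$, whose Borel--Cantelli sum diverges, and disjoint-block union bounds are even weaker. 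Once the Chen--Stein step is in hand, computing $\lambda_r$ from the definition of $M_r$, verifying the smallness of the coupling term, and summing the Borel--Cantelli series are all routine.
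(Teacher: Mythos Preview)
Your argument is correct: the declumped indicators make the $b_2$ term in Chen--Stein vanish, $b_1=O(r\log r/2^r)$ is summable on its own, and inverting $K_{M_r}=r$ indeed gives $\lambda_r\sim 2^{1+\varepsilon}\ln r$, so $\sum_r P(U_r=0)\le\sum_r(e^{-\lambda_r}+b_1)<\infty$ and Borel--Cantelli finishes along the subsequence exactly as you describe. The paper does not itself prove Theorem~\ref{thm-1.2} (it is quoted from Erd\H{o}s--R\'ev\'esz), but its proof of the parallel Theorem~\ref{THM01} follows the original Erd\H{o}s--R\'ev\'esz method, which is genuinely different from yours. Instead of Poisson approximation, one proves directly the product-form bound $P(Z_N<K)\le(1-(K+2)/2^{K+1})^{[N/(2K)]}$ (the $Z_N$-analogue of Theorem~\ref{THM-EST-SN1}) by cutting $[1,N]$ into disjoint windows of length $2K$ and using the exact identity $P(Z_{2K}\ge K)=(K+2)/2^{K+1}$ (the analogue of Lemma~\ref{LEM-S2N}); after taking logarithms this product gives the same $e^{-\lambda_r}$ decay, and it is fed into the identical subsequence/Borel--Cantelli scheme you use. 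Your Chen--Stein (or Janson) route is more conceptual and transplants immediately to other patterns or alphabets once the dependency graph is identified; the Erd\H{o}s--R\'ev\'esz route is entirely elementary---no coupling, no correlation inequalities---but hinges on the exact window computation, which is pattern-specific and has to be redone for each new statistic (as the paper does in Lemma~\ref{LEM-S2N} and Theorem~\ref{THM-EST-SN1} for switches).
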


\begin{thm}\label{thm-1.3}
Let $\varepsilon$ be any positive number. Then for almost all $\omega\in \Omega$, there exists an infinite sequence $N_i=N_i(\omega,\varepsilon)\  (i=1,2,...)$ of integers such that
$$
Z_{N_i}<[\log N_i-\log\log\log N_i+\log\log e-1+ \varepsilon].
$$
\end{thm}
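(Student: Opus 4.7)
\noindent\textbf{Proof plan for Theorem~\ref{thm-1.3}.} The plan is to fix $\varepsilon>0$, set $c:=\log\log e-1+\varepsilon$ and $k_j:=[\log N_j-\log\log\log N_j+c]$, take the deterministic subsequence $N_j:=2^j$, and prove that $Z_{N_j}<k_j$ for infinitely many $j$ almost surely; this immediately yields the theorem. Because the events $A_j:=\{Z_{N_j}<k_j\}$ share their past and so are not independent, I will apply the Kochen--Stone (generalized second Borel--Cantelli) lemma, whose hypotheses are the divergence of $\sum_j P(A_j)$ and asymptotic pairwise uncorrelatedness.

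First I will pin down the marginal probabilities via the Chen--Stein / Markov-chain asymptotic
$$
P\bigl(Z_N<k_N\bigr)=\bigl(1+o(1)\bigr)\exp\!\bigl(-N\,2^{-k_N-1}\bigr)
$$
in the regime $k_N=[\log N-\log\log\log N+c]$. Substituting $N=N_j=2^j$, $k=k_j$ and using $2^{c+1}=\log e\cdot 2^\varepsilon$ together with $\log e\cdot\ln 2=1$, one computes $N_j\,2^{-k_j-1}=2^{-\varepsilon}\ln j\cdot(1+o(1))$, so
$$
P(A_j)=(1+o(1))\,j^{-2^{-\varepsilon}}.
$$
Since $2^{-\varepsilon}<1$ for $\varepsilon>0$, the series $\sum_j P(A_j)$ diverges.

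Next I will estimate the pair correlations. For $j<j'$, the inclusion $[1,N_j]\subset[1,N_{j'}]$ together with $k_j<k_{j'}$ forces $A_j\cap A_{j'}=A_j\cap\{\text{no head-run of length}\ge k_{j'}\ \text{ends in}\ (N_j,N_{j'}]\}$; the latter depends essentially on the disjoint block $X_{N_j+1},\ldots,X_{N_{j'}}$, with a boundary overlap of only $k_{j'}$ coordinates whose effect is of order $k_{j'}\,2^{-k_{j'}}=o(P(A_{j'}))$. Applying the same asymptotic to the interval of length $N_{j'}-N_j=(1-2^{j-j'})N_{j'}$ gives
$$
P(A_j\cap A_{j'})\le(1+o(1))\,P(A_j)\,P(A_{j'})^{\,1-2^{\,j-j'}},
$$
and splitting the double sum by $d:=j'-j$ shows the dominant contribution comes from $d\to\infty$ (where the exponent on $P(A_{j'})$ tends to $1$), producing $\sum_{j,j'\le n}P(A_j\cap A_{j'})\le(1+o(1))(\sum_{j\le n}P(A_j))^2$. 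The Kochen--Stone lemma then yields $P(A_j\ \text{i.o.})=1$.

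The main obstacle is the sharp form of the asymptotic in the first step: I need a two-sided estimate $P(Z_N<k)=\exp(-N\,2^{-k-1})(1+o(1))$ with error small enough to push through the Kochen--Stone ratio, plus a uniform-in-$(j,j')$ version of the boundary-overlap correction in the second step. Both follow from a delicate but standard eigenvalue analysis of the $(k+1)\times(k+1)$ transition matrix of the Markov chain tracking the current head-run length, whose Perron eigenvalue is $1-2^{-k-1}+O(4^{-k})$.
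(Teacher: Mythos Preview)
The paper does not itself prove Theorem~\ref{thm-1.3}; it is cited from Erd\H{o}s--R\'ev\'esz (1976). The paper's proof of the analogous Theorem~\ref{THM02} is, however, a direct transcription of their argument, so that is the natural benchmark. Your plan is correct but differs from it in two linked respects. First, for the marginal probabilities you invoke the Chen--Stein / eigenvalue asymptotic $P(Z_N<k)\sim\exp(-N\,2^{-k-1})$, whereas the paper (following Erd\H{o}s--R\'ev\'esz) proves from scratch an elementary two-sided bound of the Theorem~\ref{THM-EST-SN1} type, roughly $P(Z_N<K)\asymp\bigl(1-(K{+}2)2^{-K}\bigr)^{N/K}$, avoiding any spectral input. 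Second, and more significantly, you take the dense subsequence $N_j=2^j$; adjacent events then share half their support and $P(A_jA_{j+1})/\bigl(P(A_j)P(A_{j+1})\bigr)\asymp j^{\,2^{-\varepsilon-1}}\to\infty$, so the Kochen--Stone ratio must be controlled by a global estimate showing the near-diagonal excess is $o\bigl((\sum_jP(A_j))^2\bigr)$ --- this does go through, but needs the uniformity you flag. The paper instead chooses $N_j$ so that the target run-length equals $[j^{1+\delta}]$, i.e.\ $N_j\approx2^{j^{1+\delta}}$; then $N_i/N_j\to0$ for every $i<j$, whence $P\bigl(Z_{N_i}<k_{N_j}\bigr)\to1$ and $P(A_iA_j)=(1+o(1))P(A_i)P(A_j)$ pairwise, making the Kochen--Stone verification essentially a one-liner. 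In short: your route trades an elementary bound plus a sparse subsequence for a sharper asymptotic plus a denser one; both succeed, but the paper's version is fully self-contained while yours leans on external Poisson-approximation machinery and a more delicate double-sum estimate.
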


\begin{thm}\label{thm-1.4}
Let $\{\gamma_n\}$ be a sequence of positive numbers for which
$
\sum_{n=1}^{\infty}2^{- \gamma_n}=\infty.
$
Then for almost all $\omega\in \Omega$, there exists an infinite sequence $N_i=N_i(\omega,\{\gamma_n\}) \  (i=1,2,...)$ of integers such that
$
Z_{N_i}\geq \gamma_{N_i}.
$
\end{thm}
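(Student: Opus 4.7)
My plan is to reduce Theorem~\ref{thm-1.4} to the second Borel--Cantelli lemma applied to a carefully chosen subsequence of ``pure head-run'' events. Set $k_n=\lceil\gamma_n\rceil$, so that $2^{-k_n}\ge\tfrac12\,2^{-\gamma_n}$ and hence $\sum_n 2^{-k_n}=\infty$. I may assume $\gamma_n\le n$ for all $n$: indices with $\gamma_n>n$ make $\{Z_n\ge\gamma_n\}$ impossible, and they contribute at most $\sum 2^{-n}<\infty$ to the divergent sum, so they can be discarded. For each $n$ introduce the event
\[
B_n=\{X_{n-k_n+1}=X_{n-k_n+2}=\cdots=X_n=1\},
\]
of probability $2^{-k_n}$, depending only on the coordinates in the block $[n-k_n+1,n]$. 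Since $B_n\subseteq\{Z_n\ge k_n\}\subseteq\{Z_n\ge\gamma_n\}$, it suffices to prove $P(B_n\text{ i.o.})=1$.

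The central step is to construct an increasing sequence $n_1<n_2<\cdots$ such that the dependency blocks $[n_j-k_{n_j}+1,n_j]$ are pairwise disjoint and $\sum_j 2^{-k_{n_j}}=\infty$. Granted such a subsequence, the events $B_{n_j}$ depend on disjoint coordinates of the i.i.d.\ sequence $\{X_i\}$ and are therefore independent; their probabilities sum to infinity, so the second Borel--Cantelli lemma yields $P(B_{n_j}\text{ i.o.})=1$, and the conclusion follows with $N_i=n_i$.

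I would build the subsequence by cases. If $\liminf_n k_n=K_0<\infty$, the set $\{n:k_n\le K_0\}$ is infinite; extracting $n_j$'s from it with $n_{j+1}>n_j+K_0$ makes the blocks disjoint, and each $P(B_{n_j})\ge 2^{-K_0}$ forces the weight sum to diverge trivially. If instead $k_n\to\infty$, I would partition $\mathbb N$ into consecutive blocks $Q_j=(a_{j-1},a_j]$ with $\sum_{n\in Q_j}2^{-k_n}\ge 2$ (choosing $a_j$ minimally), then within each $Q_j$ restrict attention to the sub-collection $F_j=\{n\in Q_j:n-k_n\ge a_{j-1}\}$ of indices whose dependency block lies entirely in $Q_j$. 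A geometric-series estimate shows that the ``overhang'' $Q_j\setminus F_j$ (on which $k_n>n-a_{j-1}$) contributes less than $\sum_{m\ge 1}2^{-m}=1$ to $\sum_{n\in Q_j}2^{-k_n}$, so $\sum_{n\in F_j}2^{-k_n}\ge 1$, and I would pick $n_j^*\in F_j$ maximizing $2^{-k_n}$. Disjointness across $j$ is automatic from $[n_j^*-k_{n_j^*}+1,n_j^*]\subseteq Q_j$ and the $Q_j$'s being consecutive.

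The main obstacle will be verifying $\sum_j 2^{-k_{n_j^*}}=\infty$ under only the hypothesis $\sum 2^{-\gamma_n}=\infty$. The coarse estimate $2^{-k_{n_j^*}}\ge|F_j|^{-1}\ge|Q_j|^{-1}$ need not be summable to infinity when the block lengths grow quickly, and in that regime one likely has to replace the single-event $B_{n_j^*}$ by the union $\bigcup_{n\in F_j}B_n$ over each block and use a Janson or second-moment estimate to bound below the block-level success probability by a positive constant, thereby restoring the applicability of Borel--Cantelli on the independent block-events. This delicate probabilistic comparison between the block-union probabilities and the deterministic divergence of $\sum 2^{-\gamma_n}$ is the technical crux of the argument.
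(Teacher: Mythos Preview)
The paper does not supply its own proof of Theorem~\ref{thm-1.4} --- it is quoted from Erd\H{o}s--R\'ev\'esz --- but it does prove the exact analogue for switches (Theorem~3.4*), explicitly following the Erd\H{o}s--R\'ev\'esz method. That method starts from essentially the same terminal-block events you introduce, $A_n=\{X_{n-k_n+1}=\cdots=X_n=1\}$ with $P(A_n)=2^{-k_n}$, but handles the dependence differently: instead of thinning to an independent subsequence, it invokes the Kochen--Stone version of the second Borel--Cantelli lemma (stated in the paper as Lemma~\ref{Borel Cantelli}), which only requires
\[
\liminf_{n\to\infty}\frac{\sum_{1\le k<l\le n}P(A_kA_l)}{\sum_{1\le k<l\le n}P(A_k)P(A_l)}=1.
\]
For these events the verification is short: for $i<j$ the two blocks either are disjoint or overlap on a segment of length $o(k_j)$, and in either case one gets $P(A_iA_j)=P(A_i)P(A_j)(1+o(1))$ as $j\to\infty$ (this is the computation carried out in the proof of Theorem~\ref{THM02} and then reused verbatim for Theorem~3.4*). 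No subsequence extraction, no case split on $\liminf k_n$, is needed.

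Your route is genuinely different and more elementary in intent --- you want only the classical second Borel--Cantelli lemma --- but the obstacle you yourself flag is a real gap, not a detail. In the regime $k_n\to\infty$, selecting a single representative $n_j^*$ per block $Q_j$ need not give $\sum_j 2^{-k_{n_j^*}}=\infty$: if the weights $2^{-k_n}$ on $Q_j$ are roughly uniform of size $|Q_j|^{-1}$ with, say, $|Q_j|\sim j^2$, the selected sum is $\sum_j j^{-2}<\infty$. Your proposed rescue --- replace $B_{n_j^*}$ by the block union $\bigcup_{n\in F_j}B_n$ and bound its probability below by a second-moment or Janson estimate --- can be made to work, but to carry it out you must control $\sum_{m,n\in F_j}P(B_m\cap B_n)$, which is exactly the pairwise-correlation estimate that Kochen--Stone packages for you. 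In other words, the missing ingredient in your proposal is the Kochen--Stone lemma; once you allow yourself that tool, the block construction and the case analysis become unnecessary and the argument collapses to the two-line proof the paper gives for Theorem~3.4*.
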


\begin{thm}\label{thm-1.5}
Let $\{\delta_n\}$ be a sequence of positive numbers for which
$
\sum_{n=1}^{\infty}2^{- \delta_n}<\infty.
$
Then for almost all $\omega\in \Omega$, there exists a positive integer $N_0=N_0(\omega,\{\delta_n\})$ such that
$
Z_N< \delta_N
$
if $N\geq N_0$.
\end{thm}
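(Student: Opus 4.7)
The plan is a first Borel--Cantelli argument, but with events chosen so as to match the hypothesis. The naive union bound over the possible starting positions of a head-run of length $\lceil\delta_N\rceil$ yields only $P(Z_N\ge\lceil\delta_N\rceil)\le N\cdot 2^{-\delta_N}$, and the sum $\sum_N N\cdot 2^{-\delta_N}$ is in general \emph{not} controlled by the summable quantity $\sum_n 2^{-\delta_n}$ given in the hypothesis; so the unwanted factor of $N$ has to be absorbed by shifting perspective from the window $[0,N]$ to the position $n$ at which a head-run ends.

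Concretely, set $R_n=\max\{k\ge 0:X_{n-k+1}=X_{n-k+2}=\cdots=X_n=1\}$ (with $R_n=0$ when $X_n=0$), so that $R_n$ is the length of the head-run ending at time $n$ and $Z_N=\max_{1\le n\le N}R_n$. The event $\{R_n\ge\lceil\delta_n\rceil\}$ demands that $\lceil\delta_n\rceil$ specific coin tosses all produce heads, so $P(R_n\ge\lceil\delta_n\rceil)\le 2^{-\delta_n}$. The hypothesis then gives $\sum_n P(R_n\ge\lceil\delta_n\rceil)<\infty$, and the first Borel--Cantelli lemma produces an almost-sure random index $N_1=N_1(\omega)$ such that $R_n(\omega)<\delta_n$ for every $n\ge N_1$.

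To pass from ``$R_n<\delta_n$ eventually in $n$'' to ``$Z_N<\delta_N$ eventually in $N$'', I would split $Z_N=\max\bigl(\max_{n<N_1}R_n,\max_{N_1\le n\le N}R_n\bigr)$. Since the summability hypothesis forces $\delta_N\to\infty$, the first maximum (a finite random constant) is dominated by $\delta_N$ for all large $N$; and on $[N_1,N]$ we have $R_n<\delta_n$, which combined with $\delta_n\le\delta_N$ (valid when $\{\delta_n\}$ is eventually nondecreasing, as for the natural sequences in Theorems~\ref{thm-1.2}--\ref{thm-1.4}) gives $R_n<\delta_N$. This yields $Z_N<\delta_N$ for all $N\ge N_0(\omega)$.

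The step I expect to be the main obstacle is precisely this last transition. For a non-monotone $\{\delta_n\}$ the end-indexed Borel--Cantelli alone delivers only $Z_N<\max_{N_1\le n\le N}\delta_n$, which in general is weaker than the asserted $Z_N<\delta_N$. To recover the conclusion in full generality I would combine the trivial monotonicity of $N\mapsto Z_N$ with a Borel--Cantelli step run along a geometric subsequence (so that sparse high values of $\delta_n$ cannot sabotage the passage from the end-indexed family to the window-indexed family), or, in line with standard practice, apply the theorem to the nondecreasing sequences of interest.
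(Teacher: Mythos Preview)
Your core approach---first Borel--Cantelli applied to the end-indexed events $\{R_n\ge\lceil\delta_n\rceil\}$---is exactly what the paper does. The paper does not prove Theorem~\ref{thm-1.5} itself (it is quoted from Erd\"{o}s--R\'{e}v\'{e}sz), but for its switch analogue it proves only the ``reformulated'' Theorem~3.5*, which is precisely your end-indexed statement: the event that the alternating block terminating at position $n$ has length at least $\delta_n$ has probability $2\cdot 2^{-\delta_n}$, the sum is finite, and Borel--Cantelli finishes. The paper then asserts, without argument, that Theorems~3.5 and~3.5* are reformulations of one another.

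You are right that the passage from the end-indexed to the window-indexed statement is the real difficulty, and the paper does not address it any more than you do. In fact the window-indexed statement is \emph{false} for arbitrary $\{\delta_n\}$: take $\delta_n=n$ except $\delta_{4^k}=k$; then $\sum_n 2^{-\delta_n}\le\sum_n 2^{-n}+\sum_k 2^{-k}<\infty$, yet by Theorem~\ref{thm-1.1} one has $Z_{4^k}\sim 2k>k=\delta_{4^k}$ almost surely for large $k$. So your geometric-subsequence idea cannot rescue the statement in full generality; your instinct to restrict to eventually nondecreasing $\{\delta_n\}$---where your splitting argument goes through cleanly---is the correct resolution. In short, your argument is as complete as the paper's, and your diagnosis of the gap is sharper than the paper's own treatment.
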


These limit theorems have been extended by many authors.  We refer to Guibas and Odlyzko (1980), Samarova (1981), Kusolitsch and Nemetz (1982), Nemetz and Kusolitsch (1982),  Grill (1987) and   Vaggelatou (2003).

The distribution  function of $Z_N$ and some related problems  have been studied by  Goncharov (1943), F\"{o}ldes (1979), Arratia et al. (1989), Novak (1989, 1991, 1992), Schilling (1990),
Binswanger and Embrechts (1994), Muselli (2000), Vaggelatou (2003),  T\'{u}ri (2009), Novak (2017).  Mao et al. (2015) studied the large deviation behavior for the length of  the longest head run.

Anush (2012)  posed the definition of ``switch", and  considered the bounds for the number of coin tossing switches.  Li (2013) considered the number of switches in unbiased coin-tossing, and
established the central limit theorem and the large deviation principle for the total number of switches.
According to Li (2013), a ``head" switch is the tail followed by a head and a ``tail" switch is the head followed by a tail.

Motivated by the study of the longest head-run  and the work of Li (2013), we will study the length of the longest consecutive switches in this paper. At first, we introduce some notations.

For $m,n\in\mathbb N$, define
\begin{eqnarray*}
&&S_n^{(m)}(H)=\sum_{i=m+1}^{n+m-1}(1-X_{i-1})X_i,\quad\quad S_n^{(m)}(T)=\sum_{i=m+1}^{n+m-1}X_{i-1}(1-X_i).
\end{eqnarray*}
Then $S_n^{(m)}(H)$ (resp. $S_n^{(m)}(T)$) denotes the number of ``head" switches (resp. ``tail" switches) in the trials $\{X_m,X_{m+1}, \ldots,X_{m+n-1}\}$. Set
\begin{equation}\label{def-SNM}
S_n^{(m)}=S_n^{(m)}(H)+S_n^{(m)}(T).
\end{equation}
Then $S_n^{(m)}$ denotes the total number of switches in the sequence $\{X_m,X_{m+1},\ldots,X_{m+n-1}\}$.

For $i,N\in\mathbb N$, define
$$
H_{n,i}^{(N)}=\bigcup_{i\leq m\leq i+N-n+1}\{S_n^{(m)}=n-1\},~~n=1,\ldots, N.
$$
Then $\omega\in H_{n,i}^{(N)}$ implies that there exists at least one sequence of consecutive switches of length $n-1$ in the sequence $\{X_{i}(\omega),X_{i+1}(\omega),\ldots,X_{i+N-1}(\omega)\}$. Define
\begin{equation}\label{NUM-SWITCH}
M_N^{(i)}=\max_{1\leq n\leq N}\left\{n-1|~H_{n,i}^{(N)}\neq \emptyset\right\},
\end{equation}
which stands for the number of switches of the longest consecutive switches in the sequence $\{X_{i},X_{i+1}, \ldots,X_{i+N-1}\}$.  When $i=1$, we denote $M_N$ instead of $M_N^{(1)}$.

 We use $A_1+A_2+ \cdots+A_n$ instead of $A_1\cup A_2\cup \cdots\cup A_n$ when the sets $A_i,i=1,\ldots,n$ are disjoint.


The rest of this paper is organized as follows. In Section 2, we give some illustration about the difference between $M_N$ and $Z_N$.  In Section 3, we present  main  results and some remarks. The proofs will be given in Section 4. In Section 5, we give some final remarks.

\section{The difference of $M_{N}$ and $Z_N$}

In this section, we want to give some illustration about the difference between $M_N$ and $Z_N$.
Suppose that $N\in\mathbb N, N\geq 2$.  Then by the definitions of $M_N$ and $Z_N$, we know that
$$
M_N\in \{0,1,\ldots,N-1\},\ Z_N\in \{0,1,\ldots,N\}.
$$
For $k\in \{1,\ldots,N\}$, define
\begin{equation*}
\label{(2.1)}
A_N^{(k)}=\{M_N=k-1\},\quad\quad B_N^{(k)}=\{Z_N=k\}.
\end{equation*}

Consider a sequence $\{X_i,\ldots,X_{i+k-1}\}$ of length $k$. Without loss of generality, we assume that $k$ is an even number. If it has pure heads, then we have only one case $X_i=\cdots=X_{i+k-1}=1$. If it has consecutive switches of length $k$, then we have the following two cases:
$$
X_i=1,X_{i+1}=0,\cdots, X_{i+k-2}=1,X_{i+k-1}=0,
$$
or
$$
X_i=0,X_{i+1}=1,\cdots, X_{i+k-2}=0,X_{i+k-1}=1.
$$
Based on the above rough analysis, we might conjecture that
\begin{equation*}\label{(2.2)}
P(A_N^{(k)})=2P(B_N^{(k)}).
\end{equation*}
However, it is not true. In the following, we use some concrete examples to illustrate that
the relation between  $M_N$ and $Z_N$ is complex.

\begin{itemize}
 \item Case 1: $N=2$
\begin{table}[H]
\caption{自动换行}
\begin{center}
\begin{tabular}{|p{91pt}|p{60pt}|p{60pt}|}
\hline
$M_{2}$ & $0$ & $1$ \\
\hline
Events & \{00\} \{11\} & \{01\} \{10\}\\
\hline
Number of Events & 2 & 2 \\
\hline
\end{tabular}
\end{center}
\end{table}

\begin{table}[H]
\caption{自动换行}
\begin{center}
\begin{tabular}{|p{91pt}|p{40pt}|p{60pt}|p{40pt}|}
\hline
$Z_{2}$ & $0$ & $1$ & $2$\\ \hline
Events & \{00\}& \{01\} \{10\}&\{11\}\\
\hline
Number of Events & 1 & 2 & 1 \\
\hline
\end{tabular}
\end{center}
\end{table}

\item Case 2: $N=3$
\begin{table}[H]
\caption{自动换行}
\begin{center}
\begin{tabular}{|p{91pt}|p{70pt}|p{130pt}|p{70pt}|}
\hline
$M_{3}$ & $0$ & $1$ &$2$ \\
\hline
Events & \{000\} \{111\}&\{100\}  \{001\}  \{011\}  \{110\}& \{101\} \{010\} \\
\hline
Number of Events & 2 &4& 2 \\
\hline
\end{tabular}
\end{center}
\end{table}

\begin{table}[H]
\caption{自动换行}
\begin{center}
\begin{tabular}{|p{91pt}|p{40pt}|p{130pt}|p{70pt}|p{40pt}|}
\hline
$Z_{3}$ & $0$ & $1$ & $2$ & $3$  \\ \hline
Events&\{000\}& \{100\} \{010\}  \{001\} \{101\}&\{110\}  \{011\}& \{111\}\\
\hline
Number of Events & 1 & 4 & 2 & 1 \\

\hline
\end{tabular}
\end{center}
\end{table}

\newpage

\item Case 3: $N=4$
\begin{table}[H]
\caption{自动换行}
\begin{center}
\begin{tabular}{|p{91pt}|p{40pt}|p{75pt}|p{40pt}|p{40pt}|}
\hline
$M_{4}$ & $0$ & $1$ & $2$ & $3$\\
\hline
Events & \{0000\} \{1111\} & \{1000\} \{0001\} \{0111\} \{1110\} \{0011\} \{1100\} \{0110\} \{1001\} & \{0100\} \{1011\} \{1101\} \{0010\}& \{1010\} \{0101\} \\
\hline
Number of Events & 2 &8&4& 2 \\
\hline
\end{tabular}
\end{center}
\end{table}

\begin{table}[H]
\caption{自动换行}
\begin{center}
\begin{tabular}{|p{91pt}|p{40pt}|p{75pt}|p{75pt}|p{40pt}|p{40pt}|}
\hline
$Z_{4}$ & $0$ & $1$ & $2$ & $3$ & $4$  \\ \hline
Events & \{0000\}& \{1000\} \{0100\} \{0010\} \{0001\} \{1001\} \{1010\} \{0101\} & \{1101\} \{1100\} \{0110\} \{1011\} \{0011\}& \{1110\} \{0111\} & \{1111\} \\
\hline
Number of Events & 1 & 7&5 & 2 & 1 \\

\hline
\end{tabular}
\end{center}
\end{table}

\item Case 4: $N=5$
\begin{table}[H]
\caption{自动换行}
\begin{center}
\begin{tabular}{|p{91pt}|p{40pt}|p{88pt}|p{88pt}|p{40pt}|p{40pt}|}
\hline
$M_{5}$ & $0$ & $1$ & $2$ & $3$ & $4$\\
\hline
Events& \{00000\} \{11111\}& \{10000\} \{00001\} \{01111\} \{11110\} \{00111\} \{11000\} \{00011\} \{11100\} \{10001\} \{10011\}
 \{01110\} \{01100\} \{11001\} ~\{00110\}& \{01000\} \{10111\} \{00100\} \{11011\} \{01101\}
 \{00010\} \{01101\} \{10010\} \{01001\} \{10110\}& \{01011\} \{10100\} \{00101\} \{11010\}& \{10101\} \{01010\}\\
\hline
Number of Events & 2 &14&10&4& 2 \\
\hline
\end{tabular}
\end{center}
\end{table}

\begin{table}[H]
\caption{自动换行}
\begin{center}
\begin{tabular}{|p{91pt}|p{36pt}|p{86pt}|p{86pt}|p{36pt}|p{36pt}|p{37pt}|}
\hline
$Z_{5}$ & $0$ & $1$ & $2$ & $3$ & $4$ & $5$ \\ \hline
Events& \{00000\}  & \{10000\} \{01000\} \{00100\} \{00010\} \{00001\}
\{10100\} \{10010\} \{10001\} \{01010\} \{01001\}
\{00101\} \{10101\}& \{11000\} \{01100\} \{00110\} \{00011\} \{11010\}
\{11001\} \{01101\} \{10110\} \{11011\} \{10011\}
\{01011\}  & \{11100\} \{01110\} \{00111\} \{11101\} \{10111\} & \{01111\} \{11110\}& \{11111\}\\
\hline
Number of Events & 1 & 12&11&5 & 2 & 1 \\
\hline
\end{tabular}
\end{center}
\end{table}
\end{itemize}

From the above concrete examples, we might say that the distributions of $M_N$ and $Z_N$ are very different.
In the rest of this paper, we will show that as $N\to\infty$, their limit behaviors are similar.

\section{Main results and some remarks}\label{MAIN-RES}

In this section, we present several limit results on $M_N$. Corresponding to Theorems \ref{thm-1.1}-\ref{thm-1.5}, we have the following five theorems.

\begin{thm} \label{THM-MN}
We have
\begin{equation} \label{alpha1}
\lim_{N\to\infty}\frac{M_N}{\log N}=1 \quad a.s.
\end{equation}
\end{thm}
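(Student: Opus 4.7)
The plan is to prove the almost-sure equality by sandwiching $M_N/\log N$ between $1-\varepsilon$ and $1+\varepsilon$ eventually for every $\varepsilon > 0$, in direct analogy with the classical Erd\"os--R\'enyi argument for the head-run statistic $Z_N$. The single probabilistic input needed is
\begin{equation*}
P\bigl(S_n^{(m)} = n-1\bigr) = 2\cdot 2^{-n} = 2^{1-n},
\end{equation*}
since the event that the block $X_m, X_{m+1}, \ldots, X_{m+n-1}$ is fully alternating is realized by exactly two of its $2^n$ possible values (those starting with $0$ or with $1$). This matches, up to a factor of two, the probability $2^{-n}$ of a length-$n$ pure head-run, which explains why $M_N$ should obey the same leading-order limit law as $Z_N$.

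For the upper bound, fix $\varepsilon>0$, set $k_N = \lfloor (1+\varepsilon)\log N\rfloor + 1$, and apply a union bound over starting positions:
\begin{equation*}
P(M_N \ge k_N) \;\le\; \sum_m P\bigl(S_{k_N+1}^{(m)} = k_N\bigr) \;\le\; N\cdot 2^{-k_N} \;\le\; 2 N^{-\varepsilon}.
\end{equation*}
Along the subsequence $N_j = 2^j$ this bound becomes $2\cdot 2^{-j\varepsilon}$, which is summable, so Borel--Cantelli gives $M_{N_j} < k_{N_j}$ for all large $j$ almost surely. The obvious monotonicity $M_N \le M_{N'}$ for $N\le N'$ (immediate from \eqref{NUM-SWITCH}, since any alternating window in the first $N$ trials persists in the first $N'$) combined with $N_{j+1}=2N_j$ promotes the bound from the subsequence to all $N$, yielding $\limsup_{N\to\infty} M_N/\log N \le 1+\varepsilon$ a.s.; intersecting over a countable sequence $\varepsilon\downarrow 0$ concludes this direction.

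For the lower bound, fix $\varepsilon>0$ and set $\ell_N = \lfloor (1-\varepsilon)\log N\rfloor$. The standard device is to recover independence by using disjoint blocks: partition $\{1,\ldots,N\}$ into $r_N := \lfloor N/(\ell_N + 1)\rfloor$ consecutive disjoint blocks of length $\ell_N + 1$, and let $E_b$ be the event that the $b$-th block is fully alternating. The $E_b$ are mutually independent with $P(E_b) = 2^{-\ell_N}$, and occurrence of any one of them forces $M_N\ge \ell_N$. Therefore
\begin{equation*}
P(M_N < \ell_N) \;\le\; (1 - 2^{-\ell_N})^{r_N} \;\le\; \exp\bigl(-2^{-\ell_N} r_N\bigr) \;\le\; \exp\bigl(-c\, N^{\varepsilon}/\log N\bigr)
\end{equation*}
for some $c>0$ and all large $N$, which is summable. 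Borel--Cantelli yields $M_N \ge \ell_N$ eventually a.s., so $\liminf_{N\to\infty} M_N/\log N \ge 1-\varepsilon$, and sending $\varepsilon\downarrow 0$ along a countable sequence completes the proof.

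The argument is essentially a rerun of Erd\"os--R\'enyi, and I do not expect a serious obstacle beyond one mild bookkeeping point: a length-$n$ alternating window contributes exactly $n-1$ switches (not $n$) and carries probability $2^{1-n}$ (not $2^{-n}$); once these two offsetting shifts are tracked, the counting proceeds as in the head-run case. Most of the care in writing the proof will be in the monotonicity-plus-subsequence step of the upper bound and in checking that the block count $r_N$ is large enough to make the lower-bound exponent blow up like $N^{\varepsilon}/\log N$.
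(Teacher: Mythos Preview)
Your proposal is correct and follows essentially the same approach as the paper's proof: a disjoint-block argument with Borel--Cantelli for the lower bound, and a union bound along a subsequence plus monotonicity for the upper bound. The only cosmetic difference is your choice of subsequence $N_j=2^j$ in the upper bound, whereas the paper uses $N=k^T$ with $T\varepsilon>1$; both work equally well.
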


\begin{thm}\label{THM01}
Let $\varepsilon$ be any positive number. Then for almost all $\omega\in \Omega$, there exists a finite $N_0=N_0(\omega,\varepsilon)$ such that
\begin{equation} \label{alpha2}
M_N\geq[\log N-\log\log\log N+\log\log e-1- \varepsilon]:=\alpha_1(N)
\end{equation}
if $N\geq N_0$.
\end{thm}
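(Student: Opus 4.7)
The plan is to adapt the Erd\"{o}s--R\'{e}v\'{e}sz strategy from the proof of Theorem \ref{thm-1.2} to the switch setting. The main probabilistic observation is that, for any fixed starting index $m$, the probability that $\{X_m, X_{m+1}, \ldots, X_{m+\alpha_1(N)}\}$ is an alternating block---which would contribute a consecutive switch sequence of length $\alpha_1(N)$---equals $2\cdot 2^{-(\alpha_1(N)+1)} = 2^{-\alpha_1(N)}$, the factor of $2$ accounting for the two possible alternating patterns beginning with $0$ or $1$. In the notation of \eqref{def-SNM} this says $P(S_{\alpha_1(N)+1}^{(m)}=\alpha_1(N)) = 2^{-\alpha_1(N)}$.

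Concretely, I would partition $\{X_1, \ldots, X_N\}$ into $L_N = [N/(\alpha_1(N)+1)]$ disjoint consecutive blocks each of length $\alpha_1(N)+1$. Let $B_j$ denote the event that the $j$-th such block is alternating. Since distinct blocks involve disjoint coordinates, the events $\{B_j\}$ are independent with $P(B_j) = 2^{-\alpha_1(N)}$, and hence
\begin{equation*}
P\bigl(M_N < \alpha_1(N)\bigr) \le \bigl(1 - 2^{-\alpha_1(N)}\bigr)^{L_N} \le \exp\bigl(-L_N\cdot 2^{-\alpha_1(N)}\bigr).
\end{equation*}
I would then pass to a rapidly growing subsequence $\{N_k\}$, establish summability of $\sum_k P(M_{N_k} < \alpha_1(N_{k+1}))$ via the above inequality, and conclude by the first Borel--Cantelli lemma that $M_{N_k} \ge \alpha_1(N_{k+1})$ eventually almost surely. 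Since $M_N$ is non-decreasing in $N$ (adding coin tosses cannot destroy an existing alternating block) and $\alpha_1(\cdot)$ is non-decreasing for large $N$, the sandwich $N_k \le N \le N_{k+1}$ yields $M_N \ge M_{N_k} \ge \alpha_1(N_{k+1}) \ge \alpha_1(N)$, which is the desired bound.

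The step I expect to be the main obstacle is verifying summability of the Borel--Cantelli sum. A direct calculation shows $L_N\cdot 2^{-\alpha_1(N)} \sim c(\log\log N)/\log N$, which does not grow fast enough for $\exp(-L_N\cdot 2^{-\alpha_1(N)})$ to be summable on its own. Making the argument work will therefore require either a careful choice of subsequence $\{N_k\}$ that exploits the monotonicity slack between $\alpha_1(N_k)$ and $\alpha_1(N_{k+1})$, or a sharpening of the probability bound via a second-moment (Chung--Erd\"{o}s)-type argument applied to the count of overlapping alternating starting positions rather than just disjoint ones. Ensuring that the constants line up so as to yield the precise constant $\log\log e - 1$ appearing in $\alpha_1(N)$, and correctly propagating the extra factor of $2$ through every step, is the subtlest point.
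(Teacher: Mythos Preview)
Your instinct that the disjoint-block bound is too weak is correct, and this is not a side issue but the central difficulty. With $L_N=[N/(\alpha_1(N)+1)]$ and $2^{-\alpha_1(N)}\sim 2^{1+\varepsilon}(\log\log N)/(N\log e)$, the exponent $L_N\cdot 2^{-\alpha_1(N)}$ is of order $(\log\log N)/\log N\to 0$, so your upper bound on $P(M_N<\alpha_1(N))$ tends to $1$, not $0$. No choice of subsequence $\{N_k\}$ can rescue this: since the bound does not even approach $0$, the Borel--Cantelli series diverges along every subsequence. The shortfall is not an additive constant but an entire factor of $\alpha_1(N)\sim\log N$ in the exponent, so tweaking the constant inside $\alpha_1$ cannot help either.

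The paper recovers precisely this missing factor by a first-moment device rather than a Chung--Erd\H{o}s argument. The key input is Lemma~\ref{LEM-S2N}: in a window of $2K$ consecutive tosses, the probability that \emph{some} length-$K$ subwindow is alternating equals exactly $(K+2)/2^K$, not merely $2/2^K$. Partitioning $\{1,\ldots,N\}$ into disjoint blocks of length $2K$ then yields (Theorem~\ref{THM-EST-SN1})
\[
P(M_N<K-1)\ \le\ \Bigl(1-\frac{K+2}{2^K}\Bigr)^{[\frac{1}{2}[N/K]]}
\ \approx\ \exp\Bigl(-\frac{N}{2\cdot 2^K}\Bigr),
\]
the factor $K$ in the numerator cancelling the $K$ from the block count. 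With $K=\alpha_1(N)+1$ the exponent is now of order $2^{\varepsilon}(\log\log N)/\log e$, which does grow. Along the subsequence $N_j$ defined by $\alpha_1(N_j)+1=j$ (so that $\alpha_1$ is constant on $[N_j,N_{j+1})$ and your monotonicity sandwich applies verbatim) one has $\log\log N_j\sim\log j$, the terms behave like $j^{-2^{\varepsilon}}$, and summability follows since $2^{\varepsilon}>1$. Your ``overlapping starting positions'' suggestion is therefore pointing in the right direction; what is missing from the proposal is the concrete replacement of $2/2^K$ by $(K+2)/2^K$, which is exactly what makes the constants in $\alpha_1(N)$ come out as stated.
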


\begin{thm}\label{THM02}
Let $\varepsilon$ be any positive number. Then for almost all $\omega\in \Omega$, there exists an infinite sequence $N_i=N_i(\omega,\varepsilon) (i=1,2,...)$ of integers such that
\begin{equation}\label{alpha02}
M_{N_i}<[\log N_i-\log\log\log N_i+\log\log e+\varepsilon]:=\alpha_2(N_i).
\end{equation}
\end{thm}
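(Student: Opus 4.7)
The plan is to reduce Theorem \ref{THM02} to Theorem \ref{thm-1.3} by recoding the coin tosses. Introduce the switch indicators $Y_i := \mathbf{1}_{\{X_i \neq X_{i+1}\}}$ for $i\geq 1$, and establish two preparatory facts: (i) $\{Y_i\}_{i\geq 1}$ is itself an i.i.d.\ Bernoulli$(1/2)$ sequence; and (ii) for every $N\geq 2$,
$$M_N = Z^{(Y)}_{N-1},$$
where $Z^{(Y)}_k$ denotes the longest head-run in $(Y_1,\ldots,Y_k)$. Fact (i) follows because $(X_1,\ldots,X_N)\mapsto(X_1,Y_1,\ldots,Y_{N-1})$ is a bijection on $\{0,1\}^N$: since the pre-image is uniform, the image is uniform, which forces the coordinates to be i.i.d.\ Bernoulli$(1/2)$. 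Fact (ii) is a direct rewriting from $S_n^{(m)} = \sum_{j=m}^{m+n-2} Y_j$: the event $\{S_n^{(m)} = n-1\}$ is precisely $\{Y_m=\cdots=Y_{m+n-2}=1\}$, so an alternating block of length $n$ in $\{X_i\}$ corresponds exactly to a head-run of length $n-1$ in $\{Y_i\}$.

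With (i) and (ii) in hand, I would apply Theorem \ref{thm-1.3} to the i.i.d.\ sequence $\{Y_i\}$ with $\varepsilon/2$ in place of $\varepsilon$. This yields, for almost every $\omega$, an infinite sequence of integers $m_j\to\infty$ satisfying
$$Z^{(Y)}_{m_j} < [\log m_j - \log\log\log m_j + \log\log e - 1 + \varepsilon/2].$$
Setting $N_j := m_j + 1$ and using (ii), $M_{N_j} = Z^{(Y)}_{N_j - 1}$; it remains to verify that the right-hand side above, rewritten in terms of $N_j$, is at most $\alpha_2(N_j) - 1$ for all sufficiently large $j$. Since $\log(N_j - 1) \leq \log N_j$ and $\log\log\log N_j - \log\log\log(N_j-1) \to 0$, the argument inside the first floor is eventually smaller than the argument of $\alpha_2(N_j)$ by at least $1/2$; monotonicity of the floor function then delivers the required strict inequality $M_{N_j} < \alpha_2(N_j)$ along this subsequence.

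The only real obstacle is spotting the reduction; once it is in place, the theorem collapses to a direct application of Theorem \ref{thm-1.3} together with trivial rounding bookkeeping. Should one prefer a proof that avoids this reduction, one could instead mimic the classical Erd\H{o}s--R\'ev\'esz argument by choosing a sparse subsequence $\{N_j\}$ along which the events $\{M_{N_j} < \alpha_2(N_j)\}$ are approximately independent with non-summable probability lower bounds, and concluding by the second Borel--Cantelli lemma; but that route merely reproduces, with more effort, the content already packaged in Theorem \ref{thm-1.3}.
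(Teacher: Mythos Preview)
Your proof is correct and takes a genuinely different route from the paper's. The paper proves Theorem \ref{THM02} directly, in the style of Erd\H{o}s--R\'ev\'esz: it chooses a sparse subsequence $N_j$ with $\alpha_2(N_j)=[j^{1+\delta}]$, uses the lower bound in Theorem \ref{THM-EST-SN1} to show $\sum_j P(M_{N_j}<\alpha_2(N_j))=\infty$, and then verifies the Kochen--Stone correlation condition (Lemma \ref{Borel Cantelli}) by decomposing $A_j$ across the index $N_i$. Your reduction $Y_i=\mathbf{1}_{\{X_i\neq X_{i+1}\}}$ collapses all of this to a one-line appeal to Theorem \ref{thm-1.3}, because the identity $M_N=Z^{(Y)}_{N-1}$ is \emph{exact}. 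This is sharper than the reduction the paper records in Section 5 (due to Tournier), which maps consecutive switches in $X$ to constant runs of \emph{either} symbol in a recoded sequence; that version relates $M_N$ to the longest run rather than the longest head-run, which is why the paper only claims Theorems \ref{THM-MN}, \ref{THM01}, \ref{THM04} follow easily from it. Your recoding gives all five theorems at once.

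Two small points of bookkeeping. First, the intermediate target ``right-hand side $\le \alpha_2(N_j)-1$'' is stronger than needed: since Theorem \ref{thm-1.3} already gives the strict inequality $M_{N_j}<[\cdots]$, it suffices that $[\cdots]\le \alpha_2(N_j)$, and for that your observation that the floor arguments eventually differ by at least $1/2$ is enough (a gap of $1/2$ does not in general force a strict inequality between floors, but you do not need one). Second, the difference of arguments actually tends to $1+\varepsilon/2$, so you could equally well claim a gap $\ge 1$ and recover the stronger intermediate bound; either way the conclusion $M_{N_j}<\alpha_2(N_j)$ follows.
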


\begin{thm}\label{THM03}
Let $\{\gamma_n\}$ be a sequence of positive numbers for which $\sum_{n=1}^{\infty}2^{- \gamma_n}=\infty.$
Then for almost all $\omega\in \Omega$, there exists an infinite sequence $N_i=N_i(\omega,\{\gamma_n\}) \ (i=1,2,...)$ of integers such that
$M_{N_i}\geq \gamma_{N_i}-1.$
\end{thm}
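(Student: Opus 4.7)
The plan is to reduce the statement to Theorem~\ref{thm-1.4} via a simple change of variables that turns switches into heads. Define $Y_i := X_i \oplus X_{i+1}$ (addition modulo $2$) for $i \geq 1$; then $Y_i = 1$ precisely when a switch occurs between positions $i$ and $i+1$ in the $X$-sequence.

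The first key step is to verify that $\{Y_i\}_{i \geq 1}$ is itself an i.i.d.\ Bernoulli$(1/2)$ sequence. For each $n \geq 1$ the map $(X_1, X_2, \ldots, X_{n+1}) \mapsto (X_1, Y_1, \ldots, Y_n)$ is a bijection of $\{0,1\}^{n+1}$ onto itself, with inverse $X_{k+1} = X_1 \oplus Y_1 \oplus \cdots \oplus Y_k$, so it pushes the uniform distribution forward to the uniform distribution. In particular $(X_1, Y_1, \ldots, Y_n)$ is uniform on $\{0,1\}^{n+1}$, which forces $\{Y_i\}_{i \geq 1}$ to be i.i.d.\ Bernoulli$(1/2)$. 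Since a run of $k$ consecutive switches in $(X_1, \ldots, X_N)$ is, by construction, a run of $k$ consecutive $1$'s in $(Y_1, \ldots, Y_{N-1})$, one has $M_N = Z^Y_{N-1}$, where $Z^Y$ denotes the longest head-run statistic applied to the $Y$-sequence.

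I would then apply Theorem~\ref{thm-1.4} to $\{Y_i\}$ with the shifted threshold $\gamma'_n := \gamma_{n+1} - 1$. Replacing $\gamma_n$ by $\max(\gamma_n, 2)$ preserves the divergence $\sum_n 2^{-\gamma_n} = \infty$ and strengthens the target conclusion, so one may assume $\gamma_n \geq 2$ for all $n$; then $\gamma'_n \geq 1 > 0$ and
\[
\sum_{n=1}^{\infty} 2^{-\gamma'_n} \;=\; 2\sum_{n=2}^{\infty} 2^{-\gamma_n} \;=\; \infty.
\]
Theorem~\ref{thm-1.4} applied to $\{Y_i\}$ then produces an almost surely infinite sequence $n_i$ with $Z^Y_{n_i} \geq \gamma_{n_i+1} - 1$, and setting $N_i := n_i + 1$ gives $M_{N_i} = Z^Y_{N_i - 1} \geq \gamma_{N_i} - 1$, as required. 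The only nontrivial ingredient, and therefore the main obstacle, is spotting the $\mathbb{F}_2$-linear substitution $Y_i = X_i \oplus X_{i+1}$ and verifying the resulting i.i.d.\ property; once this is in place, the theorem becomes a direct corollary of Theorem~\ref{thm-1.4}.
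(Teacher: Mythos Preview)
Your argument is correct, but it follows a genuinely different route from the paper's own proof. The paper does not reduce Theorem~\ref{THM03} to Theorem~\ref{thm-1.4}; instead it proves the reformulation Theorem~3.4* directly, by setting $A_n=\{S_{\gamma_n}^{(n-\gamma_n+1)}=\gamma_n-1\}$ (the event that the last $\gamma_n$ tosses form a block of consecutive switches), noting that $P(A_n)=2\cdot 2^{-\gamma_n}$ so that $\sum_n P(A_n)=\infty$, and then invoking the Kochen--Stone version of the second Borel--Cantelli lemma (Lemma~\ref{Borel Cantelli}) after verifying the correlation condition~(\ref{condition2}). Your transformation $Y_i=X_i\oplus X_{i+1}$ is cleaner: it converts the switch problem into the head-run problem exactly, so that $M_N=Z^Y_{N-1}$ and Theorem~\ref{thm-1.4} applies without any further probabilistic work. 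This is in fact sharper than the reduction mentioned in the paper's final remarks (Section~5), where the transformation $Y_{2n}=X_{2n}$, $Y_{2n+1}=1-X_{2n+1}$ turns switches into runs of pure heads \emph{or} pure tails; the authors note that this alternate route recovers Theorems~\ref{THM-MN}, \ref{THM01}, and~\ref{THM04}, but do not claim it for Theorem~\ref{THM03}. Your XOR substitution avoids the heads-or-tails ambiguity and gives Theorem~\ref{THM03} immediately. The trade-off is that your proof imports Theorem~\ref{thm-1.4} as a black box, whereas the paper's argument is self-contained modulo Kochen--Stone; the paper also remarks that the direct approach may extend to biased coins, where your bijection-of-$\{0,1\}^{n+1}$ argument no longer yields an i.i.d.\ sequence.
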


\begin{thm}\label{THM04}
Let $\{\delta_n\}$ be a sequence of positive numbers for which $\sum_{n=1}^{\infty}{2^{-\delta_n}}<\infty.$
Then for almost all $\omega\in \Omega$, there exists a positive integer $N_0=N_0(\omega,\{\delta_n\})$ such that $M_N< \delta_N -1$
if $N\geq N_0$.
\end{thm}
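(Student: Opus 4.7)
The plan is to reduce Theorem~\ref{THM04} to Theorem~\ref{thm-1.5} via a simple XOR encoding that recasts $M_N$ as the longest head-run statistic of an auxiliary i.i.d.\ Bernoulli$(1/2)$ sequence.

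First I would introduce $Y_i := X_{i-1}\oplus X_i$ for $i\geq 2$, the indicator that a switch occurs at position~$i$. A standard check shows that the map $(X_1,X_2,\ldots,X_n)\mapsto(X_1,Y_2,\ldots,Y_n)$ is a bijection of $\{0,1\}^n$ onto itself; since $(X_1,\ldots,X_n)$ is uniform, so is $(X_1,Y_2,\ldots,Y_n)$, and hence $(Y_i)_{i\geq 2}$ is an i.i.d.\ Bernoulli$(1/2)$ sequence. The combinatorial translation is then immediate: a window $X_m,X_{m+1},\ldots,X_{m+k}$ is alternating if and only if $Y_{m+1}=Y_{m+2}=\cdots=Y_{m+k}=1$. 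Setting $W_i:=Y_{i+1}$ for $i\geq 1$, the sequence $(W_i)_{i\geq 1}$ is i.i.d.\ Bernoulli$(1/2)$, and if $Z^W_M$ denotes the longest head-run statistic (as in (\ref{def-INK})) applied to $(W_1,\ldots,W_M)$, then $M_N=Z^W_{N-1}$ holds pointwise in $\omega$.

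To finish I would apply Theorem~\ref{thm-1.5} to the sequence $(W_i)$ with the shifted sequence $\tilde\delta_n:=\delta_{n+1}-1$. Since $\sum 2^{-\delta_n}<\infty$ forces $\delta_n\to\infty$, positivity of $\tilde\delta_n$ holds for every sufficiently large $n$, and at the finitely many remaining indices one may reset $\tilde\delta_n$ to any positive constant without affecting the asymptotic conclusion; the summability transfers, since $\sum_n 2^{-\tilde\delta_n}=2\sum_{n\geq 2}2^{-\delta_n}<\infty$. Theorem~\ref{thm-1.5} then yields, almost surely, some $M_0$ such that $Z^W_M<\tilde\delta_M=\delta_{M+1}-1$ for every $M\geq M_0$. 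Substituting $N=M+1$ gives $M_N=Z^W_{N-1}<\delta_N-1$ for all $N\geq M_0+1$, which is the claim.

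The only point demanding genuine verification is the i.i.d.\ claim for $(Y_i)_{i\geq 2}$; once the bijection is in hand, the theorem is essentially a one-line consequence of Theorem~\ref{thm-1.5} together with the $N=M+1$ index bookkeeping. The ``$-1$'' in the conclusion is exactly the slack produced by the shift $\tilde\delta_n=\delta_{n+1}-1$, which encodes the doubling $2\cdot 2^{-(k+1)}=2^{-k}$ of the probability for an alternating window of length $k+1$ (two possible patterns) relative to an all-heads window of the same length (one pattern).
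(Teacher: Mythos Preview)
Your proof is correct. The paper's own argument takes a different route: rather than invoking Theorem~\ref{thm-1.5}, the authors pass to the ``reformulated'' Theorem~3.5*, which asserts that eventually the terminal window of length $\delta_N$ in $(X_1,\ldots,X_N)$ is not fully alternating, and dispatch that by a one-line application of the first Borel--Cantelli lemma (the relevant event has probability $2\cdot 2^{-\delta_N}$, summable by hypothesis). Your XOR encoding instead reduces the whole statement to the head-run theorem in a single step. This is essentially the alternative route the authors acknowledge in Section~5 (credited to Tournier), though Tournier's encoding flips the odd-indexed tosses so that alternating runs in $X$ become \emph{constant} runs (heads or tails) in the new sequence; your version is a touch slicker in that it lands directly on head-runs and makes the index bookkeeping $M_N=Z^W_{N-1}$ explicit. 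The tradeoff: your reduction is short and transparent but depends on $p=1/2$ --- the XOR sequence $(Y_i)$ fails to be independent for a biased coin --- whereas the paper's direct Borel--Cantelli computation is written with the biased-coin generalization announced in Section~5 in mind.
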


The last two theorems can be reformulated as follows:

\noindent {\bf Theorem 3.4*} {\it
Let $\{\gamma_n\}$ be a sequence of positive numbers for which $\sum_{n=1}^{\infty}2^{- \gamma_n}=\infty.$
Then for almost all $\omega\in \Omega$, there exists an infinite sequence $N_i=N_i(\omega,\{\gamma_n\}) \ (i=1,2,...)$ of integers such that
$S^{(N_i-\gamma_{N_i})}_{N_i}\geq \gamma_{N_i}-1.$}

\noindent {\bf Theorem 3.5*} {\it
Let $\{\delta_n\}$ be a sequence of positive numbers for which $\sum_{n=1}^{\infty}{2^{-\delta_n}}<\infty.$
Then for almost all $\omega\in \Omega$, there exists a positive integer $N_0=N_0(\omega,\{\delta_n\})$ such that $S^{(N-\delta_N)}_N< \delta_N -1$
if $N\geq N_0$.
}

\begin{rem}
(i)  From Theorems 3.1-3.5, we know that $M_N$ and $Z_N$ have similar limiting behaviors.

(ii) The closely related result with respect to Theorems \ref{THM03} and \ref{THM04} is Guibas and Odlyzko (1980, Theorem 1).

\end{rem}

\section{Proofs}
\subsection{Proof of Theorem \ref{THM-MN}}

{\bf Step 1.} We prove
\begin{equation} \label{LEF}
\liminf_{N\to {\infty}}\frac{M_N}{\log N}\geq 1 \quad a.s.
\end{equation}

For any $0<\varepsilon<1$, $N\in\mathbb N$ and $N\geq 2$ , we introduce the following notations:
\begin{eqnarray*}
&&t=\left[(1-\varepsilon)\log N\right]+1,\\
&&\bar{N}=\left[N/t\right]-1,\\
&&U_k=S_{t}^{(tk+1)},\quad k=0,1,\ldots, \bar{N},
\end{eqnarray*}
where $S_{t}^{(tk+1)}$ is defined by (\ref{def-SNM}).  Then the sequence  $\{U_k,0\leq k\leq \bar{N}\}$ of random variables are independent and identically distributed with
\[
U_k\leq t-1 ~~\mbox{and}~~ P\{U_k=t-1\}=2\cdot\frac{1}{2^t}=\frac{1}{2^{t-1}}.
\]
It follows that
\begin{equation*}
P\{U_0<t-1,U_1<t-1,\cdots,U_{\bar{N}}<t-1\}=\left(1-\frac{1}{2^{t-1}}\right)^{\bar{N}+1}.
\end{equation*}

By a simple calculation, we get that
$$
\sum_{N=1}^{\infty}\left(1-\frac{1}{2^{t-1}}\right)^{\bar{N}+1}<\infty.
$$
Then by the Borel-Cantelli lemma, we get that
\begin{equation*}
\liminf_{N\to \infty}\frac{M_N}{\log N}\geq 1-\varepsilon.
\end{equation*}
By the arbitrariness of $\varepsilon$, we obtain (\ref{LEF}).

\noindent {\bf Step 2.} We  prove
\begin{equation}\label{LEF-2}
\limsup_{N\to {\infty}}\frac{M_N}{\log N}\leq 1  \quad a.s.
\end{equation}

For any $\varepsilon>0$ and $N\in\mathbb N$, we introduce the following notations:
\begin{equation*}\label{NOTA-U-AN}
\begin{aligned}
&u=\left[(1+\varepsilon)\log N\right]+1,\\
&A_N=\bigcup\limits_{k=1}^{N-u+1} \{S_{u}^{(k)}=u-1\}. \\
\end{aligned}
\end{equation*}
We have  $P\{S_{u}^{(k)}=u-1\}=\frac{1}{2^{u-1}}$ and thus  $P(A_N)\leq\frac{N}{2^{[(1+\varepsilon)\log N]}}$. For any $T\in\mathbb N$ with $T\varepsilon>1$, $k\in\mathbb N$, it holds that
\begin{equation*}\begin{aligned}
\sum_{k=1}^{\infty}P(A_{k^T})\leq&2\sum_{k=1}^{\infty}\frac{k^T}{k^{T(1+\varepsilon)}}
=2\sum_{k=1}^{\infty}\frac{1}{k^{T\varepsilon}}<{\infty},
\end{aligned}\end{equation*}
which together with the Borel-Cantelli lemma implies that
\begin{equation*}
P(A_{k^{T}} \ i.o.)=P\left(\limsup_{k\to{\infty}} \bigcup\limits_{i=0}^{k^T-u+1}\left\{S_u^{(i)}=u-1\right\}\right)=0.
\end{equation*}
It follows that
\begin{eqnarray}\label{4.3}
\limsup_{k\to\infty}\frac{M_{k^T}}{\log k^T}\leq 1 \quad\quad a.s.
\end{eqnarray}

Let $k^T< n<(k+1)^T$. By (\ref{4.3}), we have
$$
M_n\leq M_{(k+1)^T}\leq (1+\varepsilon)\log (k+1)^T\leq (1+2\varepsilon)\log k^T\leq (1+2\varepsilon)\log n
$$
with probability 1 for all but finitely many $n$. Hence (\ref{LEF-2}) holds.\hfill\fbox

\subsection{Proofs for Theorems 3.2-3.5}

The basic idea comes from \cite{ER76}. For the reader's convenience, we spell out the details. At first, as in \cite[Theorem 5]{ER76},  we give an estimate for the length of consecutive switches, which is very useful in our proofs for Theorems 3.2-3.5.

\begin{thm} \label{THM-EST-SN1} Let $N,K\in\mathbb N$ and let $M_N$ be defined in (\ref{NUM-SWITCH}) with $i=1$. Then
\begin{equation}\label{EST-SN1}
\left(1-\frac{K+2}{2^{K}}\right)^{[\frac{N}{K}]-1}\leq  P\Big(M_N<K-1\Big)\leq\left(1-\frac{K+2}{2^{K}}\right)^{[ \frac{1}{2}[\frac{N}{K}]]}
\end{equation}
if $N\geq 2K$.
\end{thm}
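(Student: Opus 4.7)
The plan is to follow the Erd\"{o}s--R\'ev\'esz (1976) Theorem~5 template that the authors cite: partition $\{X_1,\ldots,X_N\}$ into $r:=[N/K]$ non-overlapping blocks $B_j=\{X_{(j-1)K+1},\ldots,X_{jK}\}$ of length $K$, and then control $P(M_N<K-1)$ through events defined on (groups of) blocks which become independent once the underlying tosses are disjoint. For a fixed block of length $2K$, one has $K+1$ candidate starting positions $A_k=\{X_k,\ldots,X_{k+K-1}\ \text{alternate}\}$ ($k=1,\ldots,K+1$) for an alternating subsequence of length $K$; an inclusion--exclusion computation, using $P(A_k)=2/2^K$, $P(A_k\cap A_{k+d})=2/2^{K+d}$ for overlap $d\in\{1,\ldots,K-1\}$ (where the overlap forces a longer alternating run), and $P(A_k\cap A_{k+K})=4/2^{2K}$ for the unique disjoint pair, is the source of the constant $(K+2)/2^K$.

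For the upper bound, group the blocks into super-blocks $\widetilde{B}_i=B_{2i-1}\cup B_{2i}$ of length $2K$, $i=1,\ldots,[\tfrac{1}{2}[N/K]]$, and let $E_i$ be the event that some alternating subsequence of length $K$ lies entirely in $\widetilde{B}_i$. The $\widetilde{B}_i$'s depend on disjoint tosses so the $E_i$'s are independent, and $\{M_N<K-1\}\subseteq\bigcap_i E_i^c$. Combined with the lower estimate $P(E_1)\ge(K+2)/2^K$ coming from the inclusion--exclusion just described, this yields
\begin{equation*}
P(M_N<K-1)\le\prod_i\bigl(1-P(E_i)\bigr)\le\Bigl(1-\frac{K+2}{2^K}\Bigr)^{[\frac{1}{2}[N/K]]}.
\end{equation*}

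For the lower bound, I would introduce for $j=1,\ldots,r-1$ the event $F_j=\{\text{some alternating subsequence of length } K \text{ starts at a position in } B_j\}$, which depends only on tosses in $B_j\cup B_{j+1}$; since $\{M_N\ge K-1\}\subseteq\bigcup_{j=1}^{r-1}F_j$, one has $\{M_N<K-1\}\supseteq\bigcap_{j=1}^{r-1}F_j^c$. The same bookkeeping gives $P(F_j)\le(K+2)/2^K$. Adjacent $F_j,F_{j+1}$ share tosses in $B_{j+1}$ and so are not independent, but the odd- and even-indexed subfamilies are each mutually independent; a conditioning argument on the block-boundary tosses (splitting the product across these two subfamilies) then delivers the bound $(1-(K+2)/2^K)^{r-1}$.

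The principal obstacle is the inclusion--exclusion bookkeeping that extracts \emph{precisely} $(K+2)/2^K$ from geometric-series-like sums such as $\sum_{d=1}^{K-1}(K+1-d)\cdot 2^{1-K-d}+4\cdot 2^{-2K}$, rather than an off-by-one variant; a close second is handling the overlap-induced dependence of the $F_j$'s in the lower bound without losing a factor in the exponent, which is why the exponent is $r-1$ rather than $r$ on that side.
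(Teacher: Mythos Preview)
Your block decomposition and the super-block strategy for the upper bound match the paper's approach. Two points deserve attention.

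First, extracting the constant $(K+2)/2^K$: the paper does \emph{not} use inclusion--exclusion. Instead it proves (Lemma~\ref{LEM-S2N}) that $P(M_{2K}\ge K-1)=(K+2)/2^K$ exactly via a first-occurrence decomposition: writing $A_k=\{X_{k+1},\ldots,X_{k+K}\text{ alternates}\}$ for $k=0,\ldots,K$, one has $P(A_0)=2/2^K$ and, for $k\ge1$, $P(\bar A_0\cdots\bar A_{k-1}A_k)=P(\{X_k=X_{k+1}\}\cap A_k)=1/2^K$, summing to $(K+2)/2^K$. This is much cleaner than a full $(K{+}1)$-term inclusion--exclusion; note that your two-term Bonferroni gives only $4/2^K-4/2^{2K}$, which for $K\ge3$ is strictly smaller than $(K+2)/2^K$, so you really would need every higher-order term.

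Second, and more seriously, your lower-bound argument has a genuine gap. ``Conditioning on block-boundary tosses'' does not make the odd- and even-indexed families conditionally independent of each other, and splitting into two subfamilies does not by itself produce $P\bigl(\bigcap_j F_j^c\bigr)\ge\prod_j(1-P(F_j))$. What the paper actually proves is a \emph{positive-correlation} inequality: with $D_0$ the union of the even-indexed super-block events $C_l$ and $D_1$ the union of the odd-indexed ones, it shows $P(D_1C_i)\ge P(D_1)P(C_i)$ for each even $i$, hence $P(D_0D_1)\ge P(D_0)P(D_1)$, hence $P(\bar D_0\bar D_1)\ge P(\bar D_0)P(\bar D_1)$. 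The argument is carried out by hand: decompose $C_0$ into the first-occurrence events $F_i=\{\text{the first alternating-}K\text{ block in }X_1,\ldots,X_{2K}\text{ begins at position }i\}$ and verify case by case that $P(D_1F_i)\ge P(D_1)P(F_i)$, using that $D_1$ becomes more likely when the leftmost tosses it sees ($X_{K+1},X_{K+2},\ldots$) form an alternating pattern rather than a constant one. This correlation step is the crux of the lower bound; it is not bypassed by the odd/even split, and you should expect to prove it explicitly. The exponent $r-1$ on that side comes simply from the number of overlapping $2K$-windows, not from sacrificing a factor to handle dependence.
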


To prove Theorem \ref{THM-EST-SN1}, we need the following lemma.

\begin{lem}\label{LEM-S2N}
Let $N,m\in\mathbb N$ and let $M_N^{(m)}$ be defined in (\ref{NUM-SWITCH}). Then
\begin{equation}
P\left(M_{2N}^{(m)}\geq N-1\right)=\frac{N+2}{2^N}.
\end{equation}
\end{lem}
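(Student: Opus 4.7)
My plan is to prove the identity by an explicit disjoint decomposition of $\{M_{2N}^{(m)}\geq N-1\}$ indexed by the leftmost position at which an alternating block of length $N$ begins, and then to add the probabilities of the pieces.

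By translation invariance of the i.i.d.\ sequence $\{X_i\}$, I may assume $m=1$. Unpacking~(\ref{NUM-SWITCH}), the event $\{M_{2N}\geq N-1\}$ coincides with $\bigcup_{k=1}^{N+1}A_k$, where
\[
A_k \;:=\; \{X_k\neq X_{k+1}\neq\cdots\neq X_{k+N-1}\}
\]
records that a fully alternating block of length $N$ sits at positions $k,\ldots,k+N-1$. To turn this union into a disjoint one, I would set $B_1:=A_1$ and, for $2\leq k\leq N+1$, $B_k:=A_k\cap\{X_{k-1}=X_k\}$; the extra condition forbids extending the alternation one step leftward, so that $k$ is forced to be the leftmost valid start.

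Granting (i) that the $B_k$ are pairwise disjoint and jointly cover $\{M_{2N}\geq N-1\}$, and (ii) that $P(B_1)=2\cdot 2^{-N}=2^{-(N-1)}$ and $P(B_k)=2\cdot 2^{-(N+1)}=2^{-N}$ for $2\leq k\leq N+1$, summation immediately yields
\[
P\bigl(M_{2N}\geq N-1\bigr) \;=\; \frac{2}{2^N} + N\cdot\frac{1}{2^N} \;=\; \frac{N+2}{2^N}.
\]

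The probability calculations in (ii) are routine: $A_1$ picks one of two alternating length-$N$ patterns, while $B_k$ for $k\geq 2$ fixes the $N+1$ consecutive variables $X_{k-1},X_k,\ldots,X_{k+N-1}$ to one of two compatible patterns (two choices for $X_{k-1}=X_k$, everything else determined by alternation). The main obstacle is the disjointness assertion in (i). For $1\leq j<k\leq N+1$, the alternation imposed by $B_j$ forces $X_{k-1}\neq X_k$ whenever the pair $(X_{k-1},X_k)$ sits inside $B_j$'s alternation window, i.e., for $j+1\leq k\leq j+N-1$, which is incompatible with the equality built into $B_k$. The borderline pair $(j,k)=(1,N+1)$, where $(X_N,X_{N+1})$ straddles the end of $B_1$'s alternation window, is the most delicate case and will require a dedicated argument via the global combinatorics of the alternating patterns on the two halves $\{X_1,\ldots,X_N\}$ and $\{X_{N+1},\ldots,X_{2N}\}$.
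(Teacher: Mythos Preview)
Your approach is exactly the leftmost-start (first-occurrence) decomposition the paper uses: up to an index shift, your $B_k$ for $2\le k\le N$ coincides with the paper's piece $\bar A_0\cdots\overline{A_{k-2}}A_{k-1}$, since on $A_k$ the single equality $X_{k-1}=X_k$ already kills every earlier $A_j$ (each such window contains both $X_{k-1}$ and $X_k$). The paper then simply asserts $P(\bar A_0\cdots\overline{A_{k-1}}A_k)=2^{-N}$ for $k=1,\ldots,N$ and sums.

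However, the ``delicate'' pair $(j,k)=(1,N+1)$ that you flag is not merely delicate---it is a genuine overlap, so the planned disjointness argument cannot succeed. Indeed $B_1=A_1$ and $B_{N+1}=A_{N+1}\cap\{X_N=X_{N+1}\}$ intersect: take $(X_1,\ldots,X_N)$ alternating with last bit $X_N=a$, set $X_{N+1}=a$, and let $(X_{N+1},\ldots,X_{2N})$ be the alternating pattern starting from $a$; this outcome lies in $B_1\cap B_{N+1}$ for each $a\in\{0,1\}$, so $P(B_1\cap B_{N+1})=2^{-(2N-1)}>0$. The paper's own proof contains the same slip in a different guise: its last piece $\bar A_0\bar A_1\cdots\overline{A_{N-1}}A_N$ is \emph{not} simply $A_N\cap\{X_N=X_{N+1}\}$, because $\bar A_0$ (a condition on $(X_1,\ldots,X_N)$ alone) is not forced by $X_N=X_{N+1}$; its true probability is $2^{-N}-2^{-(2N-1)}$, not $2^{-N}$. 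The correct value is $P(M_{2N}\ge N-1)=\dfrac{N+2}{2^N}-\dfrac{1}{2^{2N-1}}$ (e.g.\ for $N=2$ this gives $7/8$, matching the paper's own table for $M_4$, whereas the stated lemma would give $1$). The discrepancy is second order and harmless for the subsequent asymptotics, but the exact identity as stated is false, so neither your plan nor the paper's argument can establish it.
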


\begin{proof} Since $\{M_{2N}^{(i)},i\in\mathbb N\}$ are identically distributed, we only consider the case that $i=1$ in the following.
Let
\[A=\{M_{2N}\geq N-1\},~~A_k=\{M_N^{(k+1)}= N-1\},~k=0,1,\ldots,N.\]
Then we have
\begin{equation}
A=A_0+\bar{A_0}A_1+\bar{A_0}\bar{A_1}A_2+\cdots+\bar{A_0}\bar{A_1}\cdots\overline{A_{N-1}}A_N,
\end{equation}
and
\begin{equation*}
P(A_0)=\frac{1}{2^{N-1}},~~~~
P(\bar{A_0}\bar{A_1}\cdots\overline{A_{k-1}}A_k)=\frac{1}{2^N},~k=1,\ldots,N.
\end{equation*}
Hence
\begin{equation*}
\begin{aligned}
P(A)=&P(A_0)+P(\bar{A_0}A_1)+P(\bar{A_0}\bar{A_1}A_2)+\cdots+P(\bar{A_0}\bar{A_1}\cdots\overline{A_{N-1}}A_N)\\
=&\frac{1}{2^{N-1}}+\frac{N}{2^{N}}
=\frac{N+2}{2^N}.
\end{aligned}
\end{equation*}
\end{proof}

\begin{proof}[{\bf Proof of Theorem \ref{THM-EST-SN1} }]
Let $N,K\in\mathbb N$ with $N\geq 2K$. Denote
\begin{eqnarray*}
&&B_j=\{M_K^{(j+1)}\geq K-1\},\  j=0,1,\ldots,N-K,\\
&&C_l=\bigcup_{j=lK}^{(l+1)K}B_j,\ l=0,1,\ldots,\left[\frac{N-2K}{K}\right].
\end{eqnarray*}
Then for any $l=0,1,\ldots,[\frac{N-2K}{K}]$, we have $C_l=\{M_{2K}^{(lK+1)}\geq K-1\},$
and for any $l=0,1,\ldots,[\frac{N-2K}{K}]-2$, we have $C_l\cap C_{l+2}=\emptyset$.
By Lemma \ref{LEM-S2N}, we know that for any $l=0,1,\ldots,[\frac{N-2K}{K}]$,
$$P(C_l)=\frac{K+2}{2^K}.$$

Let
\begin{equation}\label{D_01}
\begin{aligned}
D_0=&C_0+C_2+\ldots+C_{2[\frac{1}{2}[\frac{N-2K}{K}]]},\\
D_1=&C_1+C_3+\ldots+C_{2[\frac{1}{2}([\frac{N-2K}{K}]-1)]+1}.
\end{aligned}
\end{equation}

By the independence of $\{C_0,C_2,\ldots,C_{2[\frac{1}{2}[\frac{N-2K}{K}]]}\}$, we have
\begin{equation}\label{D0}
P(\bar{D_0})=P(\bar{C_0})P(\bar{C_2})\ldots P(\overline{C_{2[\frac{1}{2}[\frac{N-2K}{K}]]}})
=\left(1-\frac{K+2}{2^K}\right)^{[\frac{1}{2}[\frac{N-2K}{K}]]+1}.
\end{equation}
Similarly we have
\begin{equation}\label{D1}
P(\bar{D_1})=\left(1-\frac{K+2}{2^K}\right)^{[\frac{1}{2}([\frac{N-2K}{K}]-1)]+1}.
\end{equation}

By the obvious fact that $D_0\subset \{M_N\geq K-1\}$,  we get that
\begin{equation}\label{111}
\begin{aligned}
P(M_N<K-1)&\leq P(\bar{D_0})=\left(1-\frac{K+2}{2^K}\right)^{[\frac{1}{2}[\frac{N-2K}{K}]]+1}.
\end{aligned}
\end{equation}

In the following we prove that
\begin{equation*}
P(D_1D_0)\geq P(D_1)P(D_0).
\end{equation*}
To this end,  by (\ref{D_01}), it is enough  to prove that for any $i=2l$, $l=0,1\cdots [\frac{1}{2}[\frac{N-2K}{K}]]$,
\begin{equation*}
P(D_1C_i)\geq P(D_1)P(C_i).
\end{equation*}
Below we give the proof for  $l=0$ and the proofs for $l=1,\cdots,[\frac{1}{2}[\frac{N-2K}{K}]]$ are similar. We omit them.

For $i=1,\ldots,K+1$, denote
\begin{equation*}
\begin{aligned}
F_i=&\{(X_i,\cdots,X_{i+K-1})\ \mbox{is the first section of consecutive switches of length}\ K-1\\
&\quad\quad\quad\quad\quad\quad\quad\quad\   \mbox{in the sequence}\ (X_1,\cdots,X_{2K})\},
\end{aligned}
\end{equation*}
Then we have
\begin{eqnarray*}
F_i\cap F_j=\emptyset,~\forall i\ne j;~C_0=\bigcup_{i=1}^{K+1}F_i,
\end{eqnarray*}
and
\begin{equation*}
\begin{aligned}
P(F_1)=&P\{(X_1\cdots X_K)\ \mbox{has consecutive switches}\}=\frac{1}{2^{K-1}},\\
P(F_i)=&P\{(X_j,\cdots,X_{j+K-1})\ \mbox{has consecutive switches}, X_{j-1}=X_j\}=\frac{1}{2^K},~i=2,\ldots, K.
\end{aligned}
\end{equation*}

By the independence of $\{X_j,~j=1,2\ldots, N\}$, we have
\begin{equation}\label{F1}
P(D_1F_1)=P(D_1)P(F_1),
\end{equation}
\begin{equation}\label{F2}
\begin{aligned}
&P(D_1F_2)\\
=&P\Big(D_1\cap \{(X_2, \cdots, X_{K+1})\text{~has~consecutive~switches},~X_1=X_2\}\Big) \\
=&P\Big(D_1\cap \{(X_2,\cdots,X_{K+1})\text{~has~consecutive~switches},~X_1=X_2,X_{K+1}=1\}\Big)\\
&+P\Big(D_1\cap \{(X_2,\cdots,X_{K+1})\text{~has~consecutive~switches},~X_1=X_2,X_{K+1}=0\}\}\Big)\\
=&2P\Big(D_1\cap \{(X_2,\cdots,X_{K+1})\text{~has~consecutive~switches},~X_1=X_2,X_{K+1}=1\}\Big)\\
=&2P\Big(D_1\cap\{X_{K+1}=1\}\Big)P\Big( \{(X_2,\cdots,X_K)\text{~has~consecutive~switches},~X_K=0,~X_1=X_2\}\Big)\\
=&\frac{1}{2^K}P(D_1)=P(D_1)P(F_2),
\end{aligned}
\end{equation}
\begin{equation}\label{F3-0}
\begin{aligned}
&P(D_1F_3)\quad\quad  (\mbox{suppose that}\ K\geq 3)\\
=&P\Big(D_1\cap\{(X_3,\cdots,X_{K+2})\text{~has~consecutive~switches},~X_2=X_3\}\Big)\\
=&P\Big(D_1\cap\{(X_3,\cdots,X_{K+2})\text{~has~consecutive~switches},~X_2=X_3,~(X_{K+1},X_{K+2})=(0,1)\}\Big)\\
&+P\Big(D_1\cap\{(X_3,\cdots,X_{K+2})\text{~has~consecutive~switches},~X_2=X_3,~(X_{K+1},X_{K+2})=(1,0)\}\Big)\\
=&2P\Big(D_1\cap\{(X_3,\cdots,X_{K+2})\text{~has~consecutive~switches},~X_2=X_3,~(X_{K+1},X_{K+2})=(0,1)\}\Big)\\
=&2P\Big(\{(X_3,\cdots,X_{K})\text{~has~consecutive~switches},~X_2=X_3,X_K=1\}\Big)\\
&\times P\Big(D_1\cap\{(X_{K+1},X_{K+2})=(0,1)\}\Big)\\
=&\frac{1}{2^{K-2}}P\Big(D_1\cap\{(X_{K+1},X_{K+2})=(0,1)\}\Big)=4P\Big(D_1\cap\{(X_{K+1},X_{K+2})=(0,1)\}\Big) P(F_3).
\end{aligned}
\end{equation}
By the definition of $D_1$, we know that
\begin{eqnarray*}
&&P\Big(D_1\cap\{(X_{K+1},X_{K+2})=(0,1)\}\Big)\geq P\Big(D_1\cap\{(X_{K+1},X_{K+2})=(0,0)\}\Big),\\
&&P\Big(D_1\cap\{(X_{K+1},X_{K+2})=(0,1)\}\Big)\geq P\Big(D_1\cap\{(X_{K+1},X_{K+2})=(1,1)\}\Big),\\
&&P\Big(D_1\cap\{(X_{K+1},X_{K+2})=(0,1)\}\Big)=P\Big(D_1\cap\{(X_{K+1},X_{K+2})=(1,0)\}\Big),
\end{eqnarray*}
which together with (\ref{F3-0}) implies that
\begin{equation}\label{F3}
\begin{aligned}
&P(D_1F_3)-P(D_1)P(F_3)\\
=&\Big\{P\big(\{(X_{K+1},X_{K+2})=(0,1)\}\cap D_1\big)-P\big(\{(X_{K+1},X_{K+2})=(0,0)\}\cap D_1\big)\\
&+P\big(\{(X_{K+1},X_{K+2})=(0,1)\}\cap D_1\big)-P\big(\{(X_{K+1},X_{K+2})=(1,1)\}\cap D_1\big)\Big\}P(F_3)\\
\geq& 0.
\end{aligned}
\end{equation}
Similarly, if $K\geq 4$,  we have  that
\begin{equation}\label{F4}
    P(D_1F_i)\geq P(D_1)P(F_i), \forall  i=4,\ldots, K.
\end{equation}
Finally, by the definitions of $D_1$ and $F_{K+1}$, we know that $D_1\cap F_{K+1}=F_{K+1}$. Hence we have
\begin{equation}\label{F5}
  P(D_1F_{K+1})=P(F_{K+1})\geq P(D_1)P(F_{K+1}).
\end{equation}

By \eqref{F1},  \eqref{F2},  \eqref{F3},  \eqref{F4} and \eqref{F5}, we obtain
\begin{equation*}
P(D_1\cap C_0)=\sum_{i=1}^{K+1}P(D_1\cap F_i)\geq\sum_{i=1}^{K+1}P(F_i)P(D_1)=P(D_1)P(C_0).
\end{equation*}

It is easy to check that
\begin{equation*}
P(D_1D_0)\geq P(D_1)P(D_0)\Leftrightarrow P(\bar{D_1}\bar{D_0})\geq P(\bar{D_1})P(\bar{D_0}).
\end{equation*}
Then by (\ref{D0}) and (\ref{D1}), we get
\begin{equation}
P(\bar{D_0}\bar{D_1})\geq\Big(1-\frac{K+2}{2^K}\Big)^{[\frac{1}{2}[\frac{N-2K}{K}]]+1+[\frac{1}{2}([\frac{N-2K}{K}]-1)]+1}.\label{121}
\end{equation}
As to the right-hand side of  (\ref{121}), we have
\begin{itemize}
  \item [(i)] when $[\frac{N-2K}{K}]$ is even, the exponential part on the right-hand side is equal to
  \[
  \frac{1}{2}\Big[\frac{N-2K}{K}\Big]+\frac{1}{2}\Big[\frac{N-2K}{K}\Big]-1+2=\Big[\frac{N-2K}{K}\Big]+1;
  \]
   \item [(ii)] when  $[\frac{N-2K}{K}]$ is odd, the exponential part on the right-hand is equal to
   \[
   \frac{1}{2}\Big[\frac{N-2K}{K}\Big]-\frac{1}{2}+\Big[\frac{1}{2}\Big[\frac{N-2K}{K}\Big]-\frac{1}{2}\Big]+2=\Big[\frac{N-2K}{K}\Big]+1.
   \]
\end{itemize}
Hence
\begin{equation}\label{SN1-GEQ}
P(M_N<K-1)=P(\bar{D_0}\bar{D_1})\geq \Big(1-\frac{K+2}{2^K}\Big)^{[\frac{N-2K}{K}]+1}.
\end{equation}
By ($\ref{111}$) and (\ref{SN1-GEQ}), we complete the proof.
\end{proof}

To prove Theorem $\ref{THM01}$, we need the following lemma.
\begin{lem}\label{Lem01}
Let $\{\alpha_j,~j\geq 1\}$ be a sequence of positive numbers. Suppose that $\lim\limits_{j\to \infty}\alpha_j=a>0$. Then we have
\begin{equation*}
\sum_{j=1}^{\infty}\frac{1}{\alpha_j^{\log j}}<+\infty~~\text{if $a>2$},
\end{equation*}
and
\begin{equation*}
\sum_{j=1}^{\infty}\frac{1}{\alpha_j^{\log j}}=+\infty~~\text{if $a\leq2$}.
\end{equation*}
\end{lem}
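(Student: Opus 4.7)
The plan is to reduce the question to a standard p-series comparison via the identity
\begin{equation*}
\frac{1}{\alpha_j^{\log j}}=\frac{1}{j^{\log\alpha_j}},
\end{equation*}
which is immediate since both sides equal $2^{-\log j\,\cdot\,\log\alpha_j}$ (recall that throughout the paper $\log$ denotes the base-$2$ logarithm). Because $\alpha_j\to a>0$ and the logarithm is continuous at $a$, we have $\log\alpha_j\to\log a$, so the general term is asymptotically a p-series term with exponent $\log a$; the threshold $a=2$ corresponds precisely to exponent~$1$, which explains why it is the dividing line in the statement.

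For the convergent case $a>2$ I would fix $\varepsilon>0$ small enough that $\log a-\varepsilon>1$, then pick $J$ with $\log\alpha_j>\log a-\varepsilon$ for all $j\geq J$, and conclude $1/\alpha_j^{\log j}<1/j^{\log a-\varepsilon}$. Summability of the tail follows from the standard p-test, and the finite initial segment contributes only a finite constant. For the divergent case $a<2$ I would argue symmetrically: choose $\varepsilon>0$ with $\log a+\varepsilon<1$ and $J$ with $\log\alpha_j<\log a+\varepsilon$ for $j\geq J$, obtaining $1/\alpha_j^{\log j}>1/j^{\log a+\varepsilon}$, whose tail diverges by the p-test.

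The main obstacle is the borderline case $a=2$: the pure $\varepsilon$-trick now only yields $1/\alpha_j^{\log j}>1/j^{1+\varepsilon}$, which is a convergent bound and therefore useless for establishing divergence. Indeed, without more information on the speed of $\alpha_j\to 2$ the series could converge (for instance if $\alpha_j=2+1/\sqrt{\log j}$ one checks $\alpha_j^{\log j}\sim j\cdot 2^{c\sqrt{\log j}}$ and the tail is summable). Here I expect one must exploit additional information from the intended application -- typically $\alpha_j\leq 2$ for all large $j$, in which case $1/\alpha_j^{\log j}\geq 1/j$ and divergence follows from the harmonic series -- rather than pure limit-based reasoning; with that caveat the two paragraphs above complete the proof.
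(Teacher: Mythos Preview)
Your approach is the same as the paper's: both reduce the series to $\sum_j j^{-\log\alpha_j}$ via the identity $\alpha_j^{\log j}=j^{\log\alpha_j}$ and then compare with the $p$-series of exponent $p=\log a$. Your $\varepsilon$-margin argument for the strict cases $a>2$ and $a<2$ is equivalent to the paper's factorisation $j^{-\log\alpha_j}=j^{-p}\cdot j^{\log a-\log\alpha_j}$.

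On the borderline $a=2$ you have in fact gone further than the paper. Your counterexample $\alpha_j=2+1/\sqrt{\log j}$ is valid: one computes $\alpha_j^{\log j}\asymp j\cdot 2^{c\sqrt{\log j}}$, and Cauchy condensation shows $\sum_j 1/(j\,2^{c\sqrt{\log j}})<\infty$. So the lemma as stated is false at $a=2$. The paper's proof glosses over exactly this point: it writes $\sum j^{-\log\alpha_j}=\sum j^{-p}\cdot j^{\log a-\log\alpha_j}$ with $p\le 1$ and asserts divergence without controlling the factor $j^{\log a-\log\alpha_j}$, which can tend to $0$. In the two places where the lemma is invoked (the proofs of Theorems~\ref{THM01} and~\ref{THM02}) the relevant limit is strictly greater than~$2$, respectively can be made strictly less than~$2$ by choice of the auxiliary parameter~$\delta$, so the gap does not affect the main theorems; but the divergence clause of the lemma itself should read ``$a<2$'' rather than ``$a\le 2$''.
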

\begin{proof}
\begin{equation*}
\sum_{j=1}^{\infty}\frac{1}{\alpha_j^{\log j}}=\sum_{j=1}^{\infty}\frac{1}{\alpha_j^{\frac{\log_{\alpha_j}j}{\log_{\alpha_j}2}}}=\sum_{j=1}^{\infty}j^{-\frac{1}{\log_{\alpha_j}2}}=\sum_{j=1}^{\infty}j^{-\log \alpha_j}.
\end{equation*}
If $a>2$, then $p=\log a>1$ and thus
\begin{equation*}
\sum_{j=1}^{+\infty}j^{-\log\alpha_j}=\sum_{j=1}^{+\infty}j^{-\log a}\frac{j^{-\log\alpha_j}}{j^{-\log a}}=\sum_{j=1}^{+\infty}j^{-p}\frac{j^{-\log\alpha_j}}{j^{-\log a}}<\infty.
\end{equation*}
If $a\leq 2$, then $p=\log a\leq 1$ and thus
\begin{equation*}
\sum_{j=1}^{+\infty}j^{-\log\alpha_j}=\sum_{j=1}^{+\infty}j^{-\log a}\frac{j^{-\log\alpha_j}}{j^{-\log a}}=\sum_{j=1}^{+\infty}j^{-p}\frac{j^{-\log\alpha_j}}{j^{-\log a}}=\infty.
\end{equation*}
\end{proof}

\begin{proof} [{\bf Proof of Theorem \ref{THM01}}]
Let $N_j$ be the smallest integer with $\alpha_1(N_j)+1=j$. Since
$\lim\limits_{j\to\infty}(1-\frac{j+2}{2^j})^{-3/2}=1$, there exists $M\in \mathbb{N}$ such that $M>2$ and
$$
\left(1-\frac{j+2}{2^j}\right)^{-3/2}\leq 2,\ \forall j> M.
$$
Then by Theorem 4.1, we have
\begin{equation*}
\begin{aligned}
\sum_{j=1}^{\infty}P\left\{M_{N_j}<\alpha_1(N_j)\right\}&
\leq\sum_{j=1}^{\infty} \left(1-\frac{j+2}{2^j}\right)^{[\frac{1}{2}[\frac{N_j}{j}]]}\\
&=\sum_{j=1}^{M} \left(1-\frac{j+2}{2^j}\right)^{[\frac{1}{2}[\frac{N_j}{j}]]}+\sum_{j=M+1}^{\infty} \left(1-\frac{j+2}{2^j}\right)^{[\frac{1}{2}[\frac{N_j}{j}]]}\\
&:=\beta+\sum_{j=M+1}^{\infty} \left(1-\frac{j+2}{2^j}\right)^{[\frac{1}{2}[\frac{N_j}{j}]]}\\
&\leq \beta+\sum_{j=M+1}^{\infty} \left(1-\frac{j+2}{2^j}\right)^{\frac{N_j}{2j}-\frac{3}{2}}\\
&\leq \beta+2\sum_{j=M+1}^{\infty}\Big(1-\frac{j+2}{2^j}\Big)^{\frac{N_j}{2j}}\\
&=\beta+2\sum_{j=M+1}^{\infty} (\sqrt{e_j})^{-\frac{N_j}{2^j}\cdot\frac{j+2}{j}},
\end{aligned}
\end{equation*}
where $e_j:=(1-\frac{j+2}{2^j})^{-\frac{2^j}{j+2}}$. By $\lim\limits_{x\to \infty}(1+\frac{1}{x})^x=e$, we have
 \begin{eqnarray}\label{**}
 \lim_{j\to \infty}e_j=e.
 \end{eqnarray}
Without loss of generality, we assume that $e_j\geq 2$ for any $j>M$.

By $\alpha_1(N_j)=j$, we have $$j\leq \log N_j-\log\log\log N_j+\log\log e-1-\varepsilon,$$
and thus
$$\frac{N_j}{2^j}\geq\frac{2^{1+\varepsilon}\cdot\log\log N_j }{\log e},~~\log j<\log\log N_j.$$
Thus
\begin{eqnarray*}
\begin{aligned}
 \sum_{j=M+1}^{\infty} (\sqrt{e_j})^{-\frac{N_j}{2^j}\cdot\frac{j+2}{j}}&
 =  \sum_{j=M+1}^{\infty} \left(\frac{1}{e_j}\right)^{\frac{1}{2}\cdot \frac{N_j}{2^j}\cdot\frac{j+2}{j}}
\leq \sum_{j=M+1}^{\infty} \left(\frac{1}{e_j}\right)^{\frac{1}{2}\cdot \frac{2^{1+\varepsilon}\cdot\log\log N_j }{\log e}\cdot\frac{j+2}{j}}\\
& =\sum_{j=M+1}^{\infty} \Big(\frac{1}{e_j^{\frac{2^{\varepsilon}}{\log e}}}\Big)^{\frac{j+2}{j}\cdot\log\log N_j}\\
& \leq \sum_{j=M+1}^{\infty}\Big(\frac{1}{e_j^{\frac{2^{\varepsilon}}{\log e}}}\Big)^{\frac{j+2}{j}\cdot\log j}.
\end{aligned}
\end{eqnarray*}
For any $\varepsilon>0$, by (\ref{**}), we have
 $$\lim_{j\to\infty}e_j^{\frac{2^{\varepsilon}}{\log e}}=e^{\frac{2^{\varepsilon}}{\log e}}=\left(e^{\ln 2}\right)^{2^{\varepsilon}}=2^{2^{\varepsilon}}>2.$$
Then by Lemma $\ref{Lem01}$ and the Borel-Cantelli lemma, we complete the proof.
\end{proof}

To prove Theorem $\ref{THM02}$, we need the following version of Borel-Cantelli lemma.

\begin{lem}(\cite{KS64})\label{Borel Cantelli}
Let $A_1,A_2,\ldots$ be arbitrary events, fulfilling the conditions
$
\sum_{n=1}^{\infty}P(A_n)=\infty
$
and
\begin{eqnarray}\label{condition2}
\liminf_{n\to \infty}\frac{\sum_{1\leq k<l\leq n}P(A_kA_l)}{\sum_{1\leq k<l\leq n}P(A_k)P(A_l)}=1.
\end{eqnarray}
Then  $P\left(\limsup\limits_{n\to\infty}A_n\right)=1$.
\end{lem}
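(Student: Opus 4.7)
The plan is to derive this version of the Borel--Cantelli lemma from the classical Chung--Erd\H{o}s second-moment inequality
$$
P\Bigl(\bigcup_{k=1}^{n}A_{k}\Bigr)\geq\frac{\bigl(\sum_{k=1}^{n}P(A_{k})\bigr)^{2}}{\sum_{k,l=1}^{n}P(A_{k}\cap A_{l})},
$$
which itself follows by Cauchy--Schwarz applied to $X_{n}\cdot\mathbf{1}_{\{X_{n}>0\}}$ with $X_{n}=\sum_{k=1}^{n}\mathbf{1}_{A_{k}}$. Writing $S_{n}=\sum_{k=1}^{n}P(A_{k})$, $T_{n}=\sum_{1\leq k<l\leq n}P(A_{k}A_{l})$ and $U_{n}=\sum_{1\leq k<l\leq n}P(A_{k})P(A_{l})$, the denominator above equals $S_{n}+2T_{n}$, while $2U_{n}=S_{n}^{2}-\sum_{k=1}^{n}P(A_{k})^{2}\geq S_{n}^{2}-S_{n}$. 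Since $S_{n}\to\infty$ by the first hypothesis, it follows that $2U_{n}\sim S_{n}^{2}$.

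Next, I would exploit the $\liminf$ hypothesis by extracting a subsequence $n_{j}\uparrow\infty$ with $T_{n_{j}}/U_{n_{j}}\to 1$. Along it, the Chung--Erd\H{o}s bound gives
$$
P\Bigl(\bigcup_{k=1}^{n_{j}}A_{k}\Bigr)\geq\frac{S_{n_{j}}^{2}}{S_{n_{j}}+2T_{n_{j}}}=\frac{S_{n_{j}}^{2}}{S_{n_{j}}+2U_{n_{j}}(1+o(1))}\longrightarrow 1,
$$
using $S_{n_{j}}\to\infty$ together with $2U_{n_{j}}\sim S_{n_{j}}^{2}$. Since $\bigcup_{k=1}^{n}A_{k}$ is monotone in $n$, this already yields $P\bigl(\bigcup_{k\geq 1}A_{k}\bigr)=1$.

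To upgrade to $P(\limsup_{n}A_{n})=1$, I would rerun the argument for each tail family $(A_{k})_{k\geq m}$: the partial sums $S_{n}^{(m)}$, $T_{n}^{(m)}$, $U_{n}^{(m)}$ differ from $S_{n}$, $T_{n}$, $U_{n}$ only by terms of order $O(1)$ and $O(S_{n})$, which are dwarfed by $S_{n}^{2}$, so both hypotheses transfer verbatim. Consequently $P\bigl(\bigcup_{k\geq m}A_{k}\bigr)=1$ for every fixed $m$, and intersecting over $m$ (using continuity of $P$ along the decreasing sequence $\bigcup_{k\geq m}A_{k}$) concludes the proof.

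The main obstacle is the subsequence nature of the second step: because the hypothesis is only a $\liminf$ statement, the Chung--Erd\H{o}s bound cannot be evaluated at a generic $n$, and one must carefully pick the subsequence $n_{j}$ and track the $o(1)$ error propagating through the denominator. The tail-truncation step is routine but must be verified so that the $\liminf$ of the tail ratios still equals $1$; this reduces to checking that the correction terms are of lower order than the leading $S_{n}^{2}$ factor, which is automatic once $S_{n}\to\infty$.
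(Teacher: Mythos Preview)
The paper does not prove this lemma at all: it simply quotes it from Kochen and Stone~\cite{KS64}. So there is no ``paper's own proof'' to compare against; your proposal is supplying an argument where the authors chose to cite.

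Your argument is the standard second-moment route and is correct. A couple of minor remarks on the write-up:
\begin{itemize}
\item In the tail-truncation step you say ``both hypotheses transfer verbatim''. Strictly speaking you do not need the full statement $\liminf_n T_n^{(m)}/U_n^{(m)}=1$; you only need that along the \emph{same} subsequence $n_j$ the tail ratio also tends to~$1$. You have the ingredients for this: with $m$ fixed, $T_n-T_n^{(m)}\le (m-1)S_n$ (since $P(A_kA_l)\le P(A_l)$) and likewise $U_n-U_n^{(m)}\le S_{m-1}S_n$, both of which are $o(U_n)$ because $2U_n\sim S_n^2\to\infty$. Plugging into
\[
\frac{T_{n_j}^{(m)}}{U_{n_j}^{(m)}}=\frac{T_{n_j}-O(S_{n_j})}{U_{n_j}-O(S_{n_j})}
\]
gives convergence to~$1$ along $n_j$, which is all the Chung--Erd\H{o}s bound requires. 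It would be cleaner to phrase it this way rather than asserting the stronger $\liminf$ statement for the tail family.
\item Your bound $T_n-T_n^{(m)}=O(S_n)$ uses $P(A_kA_l)\le P(A_l)$, not merely $P(A_kA_l)\le 1$; the latter would give only $O(n)$, which can be much larger than $S_n^2$. You probably had this in mind, but it is worth making explicit.
\end{itemize}
With these clarifications the proof is complete and matches the classical Kochen--Stone argument.
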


\begin{proof}[{\bf Proof of Theorem \ref{THM02}}]
Let $\delta>0$. Let $N_j=N_j(\delta)$ be the smallest integer for which $\alpha_2(N_j)=[j^{1+\delta}]$ with $\alpha_2(N_j)$ given by $(\ref{alpha02})$.
Let
\begin{equation}\label{A_j}
A_j=\{M_{N_j}<\alpha_2(N_j)\},~~j\geq1.
\end{equation}
 By Theorem \ref{THM-EST-SN1}, we have
\begin{eqnarray}\label{proof-thm3.3-1}
\sum_{j=1}^{\infty}P(A_j)
&\geq&\sum_{j=1}^{\infty}\left(1-\frac{[j^{1+\delta}]+3}{2^{[j^{1+\delta}]+1}}\right)^{\big[\frac{N_j}{[j^{1+\delta}]+1}\big]-1}\nonumber\\
&\geq&\sum_{j=1}^{\infty}\left(\left(1-\frac{[j^{1+\delta}]+3}{2^{[j^{1+\delta}]+1}}\right)^{-\frac{2^{[j^{1+\delta}]+1}}{[j^{1+\delta}]+3}}\right)^{-\frac{[j^{1+\delta}]+3}
{2^{[j^{1+\delta}]+1}}\cdot{\frac{N_j}{[j^{1+\delta}]+1}}}\nonumber\\
&:=&\sum_{j=1}^{\infty}f_j^{-\frac{N_j}{2^{[j^{1+\delta}]+1}}\cdot\frac{[j^{1+\delta}]+3}{[j^{1+\delta}]+1}},
\end{eqnarray}
 where $f_j=\left(1-\frac{[j^{1+\delta}]+3}{2^{[j^{1+\delta}]+1}}\right)^{-\frac{2^{[j^{1+\delta}]+1}}{[j^{1+\delta}]+3}}$.  As in (\ref{**}), we have
 \begin{equation}\label{proof-thm3.3-2}
 \lim_{j\to\infty}f_j=e.
 \end{equation}
Then there exists $M\in \mathbb{N}$ such that $M>2$ and
\begin{eqnarray}\label{proof-thm3.3-3}
f_j\geq 2,\ \forall j> M.
\end{eqnarray}
 By $(\ref{alpha02})$ we have$$\log N_j-\log\log\log N_j+\log\log e+\varepsilon\leq[j^{1+\delta}]+1,$$
which implies that
$$\frac{N_j}{2^{[j^{1+\delta}]+1}}\leq \frac{\log\log N_j}{2^{\varepsilon}\log e}. $$
Then by (\ref{proof-thm3.3-1}) and (\ref{proof-thm3.3-3}), we get
\begin{eqnarray}\label{proof-thm3.3-4}
\sum_{j=1}^{\infty}P(A_j)
\geq\sum_{j=1}^{M}P(A_j)+\sum_{j=M+1}^{\infty}f_j^{-\frac{
[j^{1+\delta}]+3}{[j^{1+\delta}]+1}\cdot\frac{\log\log N_j}{2^{\varepsilon}\log e}}.
\end{eqnarray}
Let $0<\varepsilon_0<1$ satisfy
$$
[j^{1+\delta}]=[\log N_j-\log\log\log N_j+\log\log e+\varepsilon]>\varepsilon_0\log N_j, \forall j=1,2,\ldots.
$$
Then
$$
\log N_j<\frac{1}{\varepsilon_0}\cdot j^{1+\delta},
$$
which implies that
\begin{eqnarray}\label{proof-thm3.3-5}
\log\log N_j<\log \frac{1}{\varepsilon_0}+(1+\delta)\log j.
\end{eqnarray}
Hence we have
\begin{eqnarray} \label{proof-thm3.3-6}
\sum_{j=M+1}^{\infty}f_j^{-\frac{
[j^{1+\delta}]+3}{[j^{1+\delta}]+1}\cdot\frac{\log\log N_j}{2^{\varepsilon}\log e}}
&=&
\sum_{j=M+1}^{\infty}\left(f_j^{\frac{
                  [j^{1+\delta}]+3}{[j^{1+\delta}]+1}\cdot2^{-\log\log e-\varepsilon}}\right)^{-\log\log N_j}\nonumber\\
&\geq&\sum_{j=M+1}^{\infty}\left(f_j^{\frac{
[j^{1+\delta}]+3}{[j^{1+\delta}]+1}\cdot 2^{-\log\log e-\varepsilon}}\right)
           ^{-\log \frac{1}{\varepsilon_0}-(1+\delta)\log j}.
\end{eqnarray}
For any given positive number $\varepsilon$, by (\ref{proof-thm3.3-2}), we have
\begin{eqnarray}\label{proof-thm3.3-7}
\lim_{j\to\infty}f_j^{\frac{
[j^{1+\delta}]+3}{[j^{1+\delta}]+1}\cdot\frac{1+\delta}{2^{\varepsilon}\log e}}=2^{\frac{1+\delta}{2^{\varepsilon}}}\leq 2,
\end{eqnarray}
when $\delta$ is small enough.  By Lemma \ref{Lem01}, (\ref{proof-thm3.3-4}),  (\ref{proof-thm3.3-6}) and (\ref{proof-thm3.3-7}), we get that $\sum_{n=1}^{\infty}P(A_n)=\infty$.

Recall that $\{A_j,~j\geq1\}$ is defined in (\ref{A_j}). For $i<j$, we define
\begin{eqnarray*}
&&B_{i,j}=
\left\{
\begin{array}{cl}
\{M_{N_i}<\alpha_2(N_j)\},&\ \mbox{if}\ N_i\geq \alpha_2(N_j),\\
\Omega,&\ \mbox{otherwise},
\end{array}
\right.\\
&&C_{i,j}=\left\{M_{N_j-N_i}^{(N_i)}<\alpha_2(N_j)\right\}.
\end{eqnarray*}
We claim that
\begin{eqnarray}\label{proof-thm3.3-8}
P(A_j)=P(B_{i,j})P(C_{i,j})(1+o(1))\ \ \mbox{as}\ j\to\infty.
\end{eqnarray}
In fact, by the definitions of $A_j, B_{i,j}$ and $C_{i,j}$, we know that
\begin{eqnarray}\label{proof-thm3.3-9}
A_j=B_{i,j}\cap C_{i,j}\cap \left\{M^{(N_i-\alpha_2(N_j)+1)}_{2\alpha_2(N_j)}<\alpha_2(N_j)-1\right\}.
\end{eqnarray}
By Lemma \ref{LEM-S2N}, we have
\begin{eqnarray}\label{proof-thm3.3-10}
P\left(M^{(N_i-\alpha_2(N_j)+1)}_{2\alpha_2(N_j)}<\alpha_2(N_j)-1\right)
&\geq &P\left(M^{(N_i-\alpha_2(N_j)+1)}_{2\alpha_2(N_j)}<\alpha_2(N_j)-1\right)\nonumber\\
&=&1-\frac{\alpha_2(N_j)+2}{2^{\alpha_2(N_j)}}\to 1\ \mbox{as}\ j\to \infty.
\end{eqnarray}
 (\ref{proof-thm3.3-9}) and (\ref{proof-thm3.3-10}) imply (\ref{proof-thm3.3-8}).  Similar to (\ref{proof-thm3.3-8}), we have
\begin{eqnarray}\label{proof-thm3.3-11}
P(A_iA_j)=P(A_i)P(C_{i,j})(1+o(1))\ \mbox{as}\ j\to\infty.
\end{eqnarray}
By  (\ref{proof-thm3.3-8}) and (\ref{proof-thm3.3-11}), we get
\begin{eqnarray*}
P(A_iA_j)=\frac{P(A_i)P(A_j)}{P(B_{i,j})}(1+o(1))\ \mbox{as}\ j\to\infty, \end{eqnarray*}
which implies that
\begin{eqnarray}\label{proof-thm3.3-12}
\frac{P(A_iA_j)}{P(A_i)P(A_j)}=\frac{1+o(1)}{P(B_{i,j})}\geq 1+o(1)\ \mbox{as}\ j\to\infty.
\end{eqnarray}
By the fact that $\sum_{n=1}^{\infty}P(A_n)=\infty$ and    (\ref{proof-thm3.3-12}) , we know that
 ($\ref{condition2}$) holds.  By Lemma $\ref{Borel Cantelli}$, we complete the proof.
\end{proof}

\begin{proof}[{\bf Proof of Theorem 3.4*}]

Let $A_n=\{S^{(n-\gamma_n)}_n\geq \gamma_n-1\}$. Then we have
$$
P(A_n)=2\cdot 2^{-\gamma_n},
$$
which together with the assumption implies that
$$
\sum_{n=1}^{\infty}P(A_n)=\infty.
$$
 By following the method in the proof of Theorem \ref{THM02}, we have
 \begin{eqnarray*}
\frac{P(A_iA_j)}{P(A_i)P(A_j)}\geq 1+o(1)\ \mbox{as}\ j\to\infty.
\end{eqnarray*}
 Then we get that ($\ref{condition2}$) holds and thus by Lemma \ref{Borel Cantelli} we complete the proof.
\end{proof}

\begin{proof}[{\bf Proof of Theorem 3.5*}]
Let $B_n=\{S^{(n-\delta_n)}_n\geq  \delta_n-1\}$. Then we have
$$
P(B_n)=2\cdot 2^{-\delta_n},
$$
which together with the assumption implies that
$$
\sum_{n=1}^{\infty}P(B_n)<\infty.
$$
By the Borel-Cantelli lemma, we get the result.
\end{proof}

\section{Final remarks}

After the first version of our paper was uploaded to arXiv, Professor Laurent Tournier sent two emails to us and gave some helpful comments. In particular, he told us one way to reduce sonsecutive switches to pure heads or pure tails by doing the following: introduce a sequence $(Y_n)$ such that
$$
Y_{2n}=X_{2n},\ Y_{2n+1}=1-X_{2n+1}.
$$
Then $(Y_n)$ is again a sequence of independent and unbiased coin tosses.  And a sequence of  consecutive switches for $X$ is equivalent to a sequence of pure heads or pure tails for $Y$. Then Theorems 3.1, 3.2 and 3.5 can be deduced easily from Theorems 1.1, 1.2 and 1.5, respectively.

We spell out all the proofs with two reasons. One is for the reader's convenience.
The other is that as to biased coin tosses, it seems that Theorems 3.1, 3.2 and 3.5 can not be deduced directly from Theorems 1.1, 1.2 and 1.5, respectively, and our proof may be moved to this case. We will consider the biased coin tosses in a forthcoming paper.

\bigskip

\noindent {\bf\large Acknowledgments} \quad   We thank Professor Laurent Tournier for his helpful comments, which improved the first version of this paper. This work was
supported by National Natural Science Foundation of China (Grant
No. 11771309 and  11871184) and  the Fundamental Research Funds for the Central Universities of China.

\end{document}